\documentclass[reqno,english]{amsart}

\usepackage{packages}
\usepackage{macros}

\newcommand*{\dreg}{\mathcal{G}_{n,\Delta}}

\newcommand*{\diam}{C_\mathrm{diam}} 
\newcommand*{\net}{\mathcal{N}}
\newcommand*{\netB}{\mathcal{N_{\mathcal B}}}
\newcommand*{\seeds}{\mathcal{S}}
\newcommand*{\vx}{\mathbf{x}}
\newcommand*{\hx}{\hat{\vx}}

\newcommand*{\tuples}{\Xi}

\def\Vol{{\rm Vol}}

\def\dist{{\rm dist}}
\def\geom{{\rm geom}}
\def\calL{{\mathcal{L}}}
\def\calD{{\mathcal{D}}}

\newtheorem*{theorem*}{Theorem}
\newtheorem*{problem}{Problem}

\title{Universal geometric non-embedding of random regular graphs}
\author[D.\ J.\ Altschuler \and K.\ Tikhomirov]{
        Dylan J. Altschuler \and 
        Konstantin Tikhomirov 
        }

\address{Dylan J. Altschuler, Department of Mathematical Sciences, Carnegie Mellon University.}
\address{Konstantin Tikhomirov, Department of Mathematical Sciences, Carnegie Mellon University.}
\begin{document}

\begin{abstract}
    Let $\Delta\geq 3$ be fixed, $n\geq n_\Delta$ be a large integer. It is a classical result that $\Delta$--regular expanders on $n$ vertices are not embeddable as geometric (distance) graphs into Euclidean space of dimension less than $c\log n$, for some universal constant $c$. We show that for typical $\Delta$-regular graphs, this obstruction is universal with respect to the choice of norm. More precisely,
    for a uniform random $\Delta$-regular graph $G$ on $n$ vertices, it holds with high probability: there is no normed space of dimension less than $c\log n$ which admits a geometric graph isomorphic to $G$.  The proof is based on a seeded multiscale $\varepsilon$--net argument. \\ 
\end{abstract}

\maketitle
\section{Introduction}

Let $X$ be a normed space with a norm $\|\cdot\|_X$, and let $V$ be a collection of $n$
distinct points in $X$. A graph $\Gamma=(V,E)$
with the edge set
$$
E:=\big\{\{v,w\}\subset V:\;\|v-w\|_X\leq 1\big\}
$$
is said to be a {\it geometric} (or a {\it distance}) graph in $X$\footnote{We note that there are varying definitions of distance graphs in the literature. The definition we adopt here is natural in the context of high-dimensional convex geometry.}. If, for a normed space $X$ and a given graph $G$, there does not exist a distance graph isomorphic to $G$, we say $G$ is \textit{geometrically non-embeddable} into $X$. 
Geometric graphs arise naturally in data science as
similarity networks in a feature space
where distinction between data points
is quantified by a norm. An aspect of particular interest is the construction of geometric graph
embeddings:
\begin{problem}
Given an $n$--vertex graph $G$ and a normed space $X$,
construct a distance graph in $X$ isomorphic to $G$, or show that $G$ is geometrically non-embeddable into $X$.
\end{problem}

This problem has been actively studied due to algorithmic applications, especially in the Euclidean setting. The minimum dimensions $d$ for which a graph $G$ admits a geometric embeddings into $\ell_2^d$ and $\ell_\infty^d$, respectively, are sometimes called the sphericity and cubicity of $G$. We refer to \cite[Section~2]{MoserPach} and \cite[Section~3]{BiluLinial}
for a comprehensive discussion of related results.
For Euclidean spaces of constant dimension greater than one,
even the corresponding decision problem
is known to be NP hard \cite{NPHard2,NPHardGtr2}. 
However, the problem has been efficiently solved in some special cases.
In particular, it was shown in \cite{FranklMaehara}
(see also \cite{RRS89} for an earlier result as well as
\cite{Average} for a generalization)
that any $n$--vertex graph of maximal degree $\Delta$ admits
a geometric embedding into a Euclidean space
of dimension $C\Delta^2\log n$, where $C>0$
is a universal constant,
and, moreover, that embedding can be constructed algorithmically.

In the other direction, it was shown in \cite{RRS89}
that any $n$--vertex
graph $G$ does not admit
geometric embeddings into Euclidean spaces
of dimension smaller than $\frac{\log \alpha(G)}{\log (2r(G)+1)}$,
where $\alpha(G)$ is the independence number of $G$, and
$r(G)$ is its radius. In particular, this implies that
with high probability, a random $\Delta$--regular ($\Delta\geq 3$) $n$--vertex
graph is not embeddable into Euclidean spaces of dimension smaller than
$\frac{c_\Delta\log n}{\log \log n}$ (see also \cite{RRSLower}).

More recent developments within the theory of metric embeddings
provide a functional-analytic viewpoint for proving geometric non-embedding of expander graphs.
Given a $\Delta$--regular graph $G$
on $\{1,2,\dots,n\}$, a parameter $p\in[1,\infty)$,
and a normed space $X$,
the {\it non-linear Poincar\'e constant} $\gamma(G,\|\cdot\|_X^p)$
is defined as the smallest positive number such that
for every choice of $n$ points $x_1,x_2,\dots,x_n$
in $X$,
\begin{equation}\label{eq:Poincare}
\frac{1}{n^2}\sum_{i,j=1}^n \|x_i-x_j\|_X^p
\leq \frac{\gamma(G,\|\cdot\|_X^p)}{\Delta n}\sum_{(i,j):\,i\sim j}
\|x_i-x_j\|_X^p,
\end{equation}
where we write $i\sim j$ for adjacent vertices $i,j$ of $G$.

Note that whenever $G$ admits a geometric embedding $\Gamma$ in $X$,
the parameter $\gamma(G,\|\cdot\|_X)$ provides an upper bound
on the expected distance (in $X$) between two randomly
chosen vertices of $\Gamma$. On the other hand,
a simple volumetric argument yields a {\it lower} bound
for the average distance of the form $\Omega((n/\Delta)^{1/\dim X})$
(see Section~\ref{Poincare} for details).
A combination of these two observations produces a natural
{\it necessary} condition for the existence of a geometric embedding for arbitrary $\Delta$--regular graph $G$:
\begin{equation}\label{eq: suf condition}
\gamma(G,\|\cdot\|_X)=\Omega((n/\Delta)^{1/\dim X}),
\end{equation}
where we use a standard asymptotic notation $\Omega(\cdot)$.

We recall that an $n$--vertex $\Delta$--regular graph $G$
is a {\it spectral expander} with spectral gap at least $\beta$,
if $1-\frac{1}{\Delta}\,\lambda_2(G)\geq \beta$,
where $\lambda_2(G)$ is the second eigenvalue
of the adjacency matrix of $G$. 
It is known that Euclidean spaces
satisfy the Poincar\'e inequality with a constant
depending only on the spectral gap of the graph and not on the space dimension \cite{Eskenazis2022}.
Together with \eqref{eq: suf condition}, this implies: 
\begin{proposition}[Corollary of the Poincar\'e
inequality in Euclidean spaces]\label{prop:P lp}
For every $\beta>0$
there is $c>0$ depending only on $\beta$
with the following property.
Let $\Delta\geq 3$, and for a large $n$
with $\Delta n$ even,
let $G_n$ be a $\Delta$--regular graph
on $n$ vertices with spectral gap at least $\beta$.
Then for every $d\leq c \log n$
there is no geometric embedding of $G_n$ into
a $d$--dimensional Euclidean space.
\end{proposition}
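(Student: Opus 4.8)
The plan is to derive the proposition as a corollary of two facts already recorded above: the dimension-free Poincar\'e inequality for Euclidean spaces, and the elementary volumetric lower bound on average distances that underlies the necessary condition \eqref{eq: suf condition}. Concretely, I would argue by contradiction: suppose that for some integer $d \le c\log n$ (with $c = c(\beta)$ to be fixed at the end) the graph $G_n$ admits a geometric embedding $\Gamma$ into a $d$-dimensional Euclidean space $X$, realized by points $x_1,\dots,x_n \in X$ with $\|x_i - x_j\|_X \le 1$ exactly when $i \sim j$. The strategy is to sandwich the average distance $\frac{1}{n^2}\sum_{i,j}\|x_i - x_j\|_X$ between an upper bound depending only on $\beta$ and a lower bound growing like $(n/\Delta)^{1/d}$, and to observe that these are incompatible once $d$ is small.

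For the upper bound I would invoke the Euclidean Poincar\'e inequality \cite{Eskenazis2022}, which gives $\gamma(G_n,\|\cdot\|_X) \le C_0(\beta)$ for a constant $C_0(\beta)$ independent of $d$ and $n$ (for the exponent $p = 2$ this is in any case immediate: applying the spectral gap of $G_n$ coordinatewise yields $\gamma(G_n,\|\cdot\|_X^2) \le \beta^{-1}$, and Cauchy--Schwarz then gives $\gamma(G_n,\|\cdot\|_X) \le \beta^{-1/2}$). Since each of the $\Delta n$ ordered adjacent pairs contributes a term of size at most $1$ to the right-hand side of \eqref{eq:Poincare}, this bounds the average distance by $C_0(\beta)$. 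For the lower bound I would use the packing argument of Section~\ref{Poincare}: any two of the $x_i$ lying in a common ball of radius $1/2$ are at distance $\le 1$ and hence adjacent, so each such ball meets $\{x_i\}$ in at most $\Delta + 1$ points (the clique number of $G_n$); covering a ball of radius $R \ge 1$ by $(C R)^d$ balls of radius $1/2$ shows that every ball of radius $R$ contains at most $(CR)^d(\Delta+1)$ of the points. Choosing $R$ a small constant multiple of $(n/\Delta)^{1/d}$ so that $(CR)^d(\Delta+1) \le n/2$, one finds that for each $i$ at least $n/2$ of the $x_j$ lie at distance $\ge R$, so the average distance is at least $R/2 \ge c_1 (n/\Delta)^{1/d}$ for an absolute constant $c_1$.

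Combining the two estimates gives $c_1 (n/\Delta)^{1/d} \le C_0(\beta)$, hence $d \ge \log(n/\Delta) / \log\!\big(C_0(\beta)/c_1\big)$ (one may assume the ratio exceeds $1$, since otherwise already $(n/\Delta)^{1/d} \le 1$, which is false for $n > \Delta$). For $n \ge \Delta^2$ this reads $d \ge \tfrac12 \log n / \log(C_0(\beta)/c_1)$, so choosing $c = c(\beta)$ strictly below $\tfrac12 / \log(C_0(\beta)/c_1)$ --- and small enough that $R \ge 1$ holds for large $n$, which costs only another $\beta$-free constraint --- contradicts $d \le c\log n$. I do not expect a real obstacle here: the only substantial ingredient is the dimension-independence of the Euclidean Poincar\'e constant, which the excerpt already grants via \cite{Eskenazis2022} (and which for $p = 2$ is elementary); the rest is bookkeeping, the one point requiring slight care being to route both the upper and lower bounds through the \emph{same} functional of the embedding and to absorb the $\log\Delta$ term into ``$n$ large'' so that $c$ ends up depending on $\beta$ alone.
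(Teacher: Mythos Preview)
Your proposal is correct and follows essentially the same route as the paper's own proof: the paper packages the volumetric packing argument into a lemma bounding the average pairwise distance from below by $\frac{1}{4}\big(\frac{n}{2(\Delta+1)}\big)^{1/d}$, records the resulting non-embedding criterion as Proposition~\ref{prop:dist}, and then combines it with the dimension-free Euclidean Poincar\'e constant exactly as you do. Your contradiction argument and the paper's criterion are the same mechanism viewed from two sides, and your aside that $\gamma(G_n,\|\cdot\|_2^2)\le\beta^{-1}$ follows directly from the spectral gap is a welcome elementary substitute for the citation.
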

In view of \cite{Friedman2008},
for every constant $\Delta\geq 3$
a uniform random
$n$--vertex $\Delta$--regular graph has a spectral
gap bounded below by a universal constant with probability $1-o(1)$.
Therefore, the above proposition implies that
with high probability a uniform random
$\Delta$--regular graph on $[n]$ does not admit
geometric embeddings into Euclidean spaces of dimension
smaller than a constant multiple of $\log n$.

The statement of Proposition~\ref{prop:P lp}
generalizes to arbitrary normed spaces 
that enjoy ``dimension--independent'' Poincar\'e constants.
In particular, the result holds for normed spaces with a 
$1$--unconditional basis and non-trivial cotype \cite{MR3208067,MR1301394,ADTT2024}. 
However, it is known that for certain spaces,
such as $\ell_\infty^d$,
the parameter $\gamma(G,\|\cdot\|_X)$ grows
with the space dimension \cite{Eskenazis2022}.
The dimension--optimal general upper bound on $\gamma(G,\|\cdot\|_X)$,
independent of the geometry of the normed space $X$,
was obtained in \cite{Naor2021}
(see also the survey \cite{Eskenazis2022}):
$$
\gamma(G,\|\cdot\|_X)=O\Big(\frac{\log(\dim X+1)}{1-\Delta^{-1}\lambda_2(G)}\Big).
$$
Combined with \eqref{eq: suf condition},
this gives the following result:
\begin{proposition}[A corollary of \cite{Naor2021} and \eqref{eq: suf condition}]\label{prop:P general}
For every $\beta>0$
there is $c>0$ depending on $\beta$
with the following property.
Let $\Delta\geq 3$, and let $n$ be a large integer with
$\Delta n$ even.
Let $G_n$ be a $\Delta$--regular graph
on $n$ vertices with spectral gap at least $\beta$,
and let $X$ be a normed space
of dimension $d\leq \frac{c\log n}{\log \log \log n}$.
Then $G$ does not
admit a geometric embedding into $X$.
\end{proposition}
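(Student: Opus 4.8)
The plan is a direct computation that pits the dimension-dependent upper bound on the non-linear Poincar\'e constant from \cite{Naor2021} against the volumetric necessary condition \eqref{eq: suf condition}. Suppose, towards a contradiction, that $G_n$ admits a geometric embedding into a normed space $X$ with $\dim X = d$, where $d \le \frac{c\log n}{\log\log\log n}$ and the constant $c = c(\beta) \in (0,1/2)$ will be pinned down at the end. Since $G_n$ embeds geometrically into $X$, the necessary condition \eqref{eq: suf condition} furnishes a universal constant $c_1 > 0$ with
\[
\gamma(G_n,\|\cdot\|_X)\ \ge\ c_1\,(n/\Delta)^{1/d};
\]
this is the volumetric lower bound on the average pairwise distance of the $n$ embedded points, turned into a bound on $\gamma$ via \eqref{eq:Poincare} with $p = 1$, using that any ball of radius $1/2$ in $X$ contains at most $\Delta+1$ of those points because it induces a clique. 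In the opposite direction, the spectral-gap hypothesis reads $1 - \Delta^{-1}\lambda_2(G_n) \ge \beta$, so the bound of \cite{Naor2021} gives a universal constant $C_2$ with
\[
\gamma(G_n,\|\cdot\|_X)\ \le\ C_2\,\beta^{-1}\log(d+1).
\]

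Combining the two estimates yields $c_1(n/\Delta)^{1/d} \le C_2\beta^{-1}\log(d+1)$, and taking logarithms,
\[
\frac{\log(n/\Delta)}{d}\ \le\ \log\log(d+1) + C_\beta, \qquad C_\beta := \log\bigl(C_2/(c_1\beta)\bigr).
\]
The left-hand side is decreasing in $d$ and the right-hand side increasing, so it suffices to show this inequality fails at the largest admissible dimension $d_{\max} := \lfloor c\log n / \log\log\log n \rfloor$. We may assume $\log(n/\Delta) \ge (\log n)/2$ (true when $\Delta$ is fixed, as in the rest of the paper, and more generally whenever $\Delta \le \sqrt n$), so the left-hand side at $d = d_{\max}$ is at least $(2c)^{-1}\log\log\log n$; since $d_{\max} \le \log n$ for large $n$, two applications of $\log(x+1)\le\log x+1$ show the right-hand side is at most $\log\log\log n + C_\beta + 1$. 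As $c < 1/2$, the quantity $\bigl((2c)^{-1} - 1\bigr)\log\log\log n$ exceeds $C_\beta + 1$ for all sufficiently large $n$, which is the desired contradiction. (For concreteness, $c$ can be taken to be any fixed number in $(0,1/2)$, with the threshold on $n$ then depending on $\beta$ only through $C_\beta$.)

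The computation itself is routine, so the step that really requires care is the iterated-logarithm bookkeeping in the last paragraph: one must check that $\log\log(d+1)$, evaluated at a dimension $d$ of order $\log n / \log\log\log n$, is still only $(1+o(1))\log\log\log n$. This is exactly why the slack must be extracted by taking $c$ strictly below $1/2$ rather than merely small, and why the hypothesis on $\dim X$ features $\log\log\log n$ rather than, say, $\log\log n$ --- with a dimension that large the left-hand side of the displayed inequality would sink below $\log\log(d+1)$ and the argument would break down.
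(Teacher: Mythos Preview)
Your proof is correct and follows essentially the same route as the paper: combine the Naor bound $\gamma(G_n,\|\cdot\|_X)=O_\beta(\log d)$ with the volumetric necessary condition \eqref{eq: suf condition} (equivalently, Proposition~\ref{prop:dist}), then solve the resulting inequality for $d$. The paper simply writes ``Solving the last inequality for $d$ leads to Proposition~\ref{prop:P general}'' and leaves the iterated-logarithm arithmetic to the reader, whereas you have spelled it out explicitly; the substance is identical.
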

We refer to Section~\ref{Poincare} of this note for a discussion of the above statements.
We emphasize that these statements are essentially known, as they can be obtained by simple combinations
of existing results.

\bigskip

The main purpose of the present article
is to prove an analog of
Proposition~\ref{prop:P lp} for {\it arbitrary} normed spaces,
i.e remove the extra $\log\log\log n$ from the statement
of Proposition~\ref{prop:P general}.
Whereas Proposition~\ref{prop:P general} deals
with arbitrary deterministic expanders, we prove
the result for {\it random} regular graphs:

\begin{theorem}[Main result]\label{thm:main}
Fix $\Delta\geq 3$.
For every large $n$
with $\Delta n$ even,
let $G_n$ be a uniform random $\Delta$--regular graph
on $n$ vertices. Then with probability $1-o(1)$,
$G_n$ has the following property:
for every $d\leq c_{\text{\tiny\ref{thm:main}}} \log n$ and for every $d$--dimensional
normed space $X$,
there is no geometrical embedding of $G_n$ into $X$.
Here, $c_{\text{\tiny\ref{thm:main}}}>0$ is a universal constant.
\end{theorem}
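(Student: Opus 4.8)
The plan is to bound the number of labeled $\Delta$--regular graphs on $[n]$ that embed as a geometric graph into \emph{some} normed space of dimension at most $d:=\lfloor c_{\ref{thm:main}}\log n\rfloor$; call this family $\mathcal R_d$. Since $G_n$ is uniform, $\Prob[G_n\in\mathcal R_d]=|\mathcal R_d|/|\dreg|$, and Stirling's formula gives $\log|\dreg|=\tfrac{\Delta}{2}\,n\log n+O_\Delta(n)$. As $\Delta\geq 3$, the coefficient $\tfrac{\Delta}{2}\geq\tfrac32$, so it is enough to show $\log|\mathcal R_d|\leq(\tfrac{\Delta}{2}-\Omega_\Delta(1))\,n\log n$, and the whole point is to produce a discretization of geometric embeddings that small.

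First I would set up the net. An embedding consists of a norm on $\mathbb R^{d'}$ with $d'\leq d$ together with a point configuration $(x_v)_{v\in[n]}$; applying an affine map we may assume the configuration affinely spans $\mathbb R^{d'}$, is connected along edges of the embedded graph, and is in John position. A net point records, at a constant resolution, an approximate norm and an approximate configuration. Such a net point determines the embedded graph except on the ``fuzzy'' pairs, those at approximate distance within a constant of $1$; because $G_n$ has maximum degree $\Delta$, every metric ball of bounded radius contains at most $(\Delta+1)C^d$ configuration points (extract an independent, hence well--separated, subset and apply a packing bound), so there are at most $C^d\cdot n=n^{O(c_{\ref{thm:main}})}\cdot n$ fuzzy pairs overall, and at most $\exp(O(c_{\ref{thm:main}})\,n\log n)$ graphs are consistent with a given net point. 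Hence $\log|\mathcal R_d|\leq\log|\net|+O(c_{\ref{thm:main}})\,n\log n$, and it remains to build a net $\net$ with $\log|\net|$ a small multiple of $n\log n$.

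A naive net is too large, and this is exactly where the seeded multiscale idea enters. If the norm were Euclidean, rooting a BFS tree of $G_n$ and placing each vertex in a unit ball around its parent would cost $(C/\varepsilon)^d$ per vertex and $\exp(O(c_{\ref{thm:main}})\,n\log n)$ in total — acceptable. But for a general norm John's theorem forces the configuration into a Euclidean ball of radius $\approx\sqrt d\cdot\mathrm{diam}(G_n)\approx\sqrt{\log n}\,\log n$, and a Euclidean net then costs $\exp(\Theta(n\log n\log\log n))$ — a $\log\log n$ factor too much (equivalently, the Banach--Mazur compactum of $d$--dimensional norms cannot be discretized cheaply enough). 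To overcome this I would use the expansion of $G_n$ to select a small ``seed'' set $\seeds\subseteq[n]$, $|\seeds|=o(n/\log\log n)$, whose vertices are forced to be spread out in $\mathbb R^{d'}$ for every embedding (because $\Delta$--regularity rules out cliques of size $>\Delta+1$, so almost all pairs of seeds are non--edges and hence more than one apart). Placing the seeds at a constant resolution then costs only $\exp(|\seeds|\cdot O(d\log\log n))=\exp(o(n\log n))$, and, being spread out, the seeds already pin down the ambient norm up to a constant relative error. With a near--correct norm in hand, the remaining vertices are located along the BFS tree, each within the (essentially known) unit ball around its parent at cost $n^{O(c_{\ref{thm:main}})}$, refined dyadically over the $O(\log\log n)$ scales to absorb the residual error in the norm and the spread of the configuration. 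This gives a net with $\log|\net|=O(c_{\ref{thm:main}})\,n\log n$, so $\Prob[G_n\in\mathcal R_d]\leq\exp\big((O(c_{\ref{thm:main}})-\tfrac{\Delta}{2}+o(1))\,n\log n\big)\to 0$ once $c_{\ref{thm:main}}$ is a small enough universal constant.

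The step I expect to be the main obstacle is the rigidity behind the seeded propagation: one has to show that $o(n)$ seed positions together with the combinatorics of $G_n$ genuinely determine the ambient norm and then, scale by scale, the positions of all remaining vertices up to a bounded number of choices, \emph{uniformly over all norms and all realizing configurations} (both adversarial). In particular one must rule out near--``flat'' embeddings (configurations concentrated near a lower--dimensional subspace), show that a suitably chosen seed set is spread out in $\mathbb R^{d'}$ for every embedding, and control error accumulation along BFS paths of length $\Theta(\log n)$. This is where the strong spectral expansion and tree--like local structure of a random $\Delta$--regular graph are really used; the counting reduction, the packing bound for fuzzy pairs, and the final union bound are routine.
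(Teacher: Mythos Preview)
Your counting framework and the identification of the $\log\log n$ obstruction are both correct, and the packing bound on ``fuzzy'' pairs matches what the paper uses. But the step you yourself flag as the main obstacle---that a small seed set together with the combinatorics of $G_n$ should ``pin down the ambient norm up to a constant relative error''---is a genuine gap, and the paper does \emph{not} attempt to fill it. Knowing the Euclidean positions of $o(n)$ seeds, even spread out, does not determine $\|\cdot\|_X$: the adjacency data among seeds supplies only $O(\Delta\,|\seeds|)$ constraints of the form $v\in B_X$, while the non-edges only say certain vectors lie \emph{outside} $B_X$, and neither collection forces $B_X$ to be determined up to bounded distortion uniformly over all symmetric convex bodies. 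Your BFS propagation then inherits this uncertainty on top of the error accumulation over paths of length $\Theta(\log n)$ that you already worry about.

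The paper bypasses both difficulties. First, the norm is discretized \emph{separately}, by a $2$-net $\netB(d)$ in the Banach--Mazur compactum; Pisier's estimate gives $|\netB(d)|\leq\exp(e^{C_{\mathrm{BM}}d})\leq e^{\sqrt n}$ for $d\leq c\log n$, which is negligible against the eventual $e^{-\Omega(n\log n)}$ probability bound, so there is no need to recover the norm from the configuration at all. Second, with $X$ fixed, the configuration net uses no graph structure and no BFS: the seeds depend on the tuple $\vx$, not on $G$. For each $x_i$ one defines a \emph{local scale of separation} $r_{\ell_i}$ (the smallest dyadic radius at which the $X$-ball about $x_i$ already contains $n^{2\eps}$ points of $\vx$); a random sample of $\lfloor n^{1-\eps}\rfloor$ points of $\vx$, repeated at each of the $O(\log\log n)$ scales, guarantees a seed within $r_{\ell_i}$ of every $x_i$. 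One records (i) the coarse positions of the $O(n^{1-\eps}\log\log n)$ seeds in a $1$-net of the ambient ball, total cost $e^{o(n)}$, and (ii) for each $i$, the choice of seed and the position of $x_i$ in a $(c_0 r_{\ell_i})$-net of the $r_{\ell_i}$-ball around that seed, cost $n^{1-\eps}\cdot(3/c_0)^d$ per vertex. This yields a net of size $e^{(1-\eps+O(c))\,n\log n}$ with no propagation of error and no rigidity input. The ``long/fuzzy'' dichotomy is then read off at the local scale $r_{\ell_i}$ rather than at unit scale, and the probability that a uniform $\Delta$-regular graph has all its edges avoid the ``long'' set of a fixed net point is computed in the $G(n,\Delta n/2)$ model and transferred to $\dreg$ by a change-of-measure estimate.
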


Without any attempt to optimize, the value $.001$ suffices for $c_{\text{\tiny\ref{thm:main}}}$. 

\begin{remark}
While our proof naturally extends to the regime
$\Delta=n^{\ao{1}}$, we prefer to focus on the constant
degree in this note. When $\Delta\to\infty$, 
the optimal threshold for embeddability is a function of 
both $d$ and $\Delta$, and finding the 
function even in the Euclidean setting appears to be an open problem as of this writing.
\end{remark}

\begin{remark}
    There are related notions of embeddings considered in the literature, such as monotone embeddings \cite{BiluLinial, ordinal}. It is plausible the methods of this article may be of use in studying non-embeddability of typical regular graphs in these settings. 
\end{remark}

\bigskip

{\bf Acknowledgment.} The second named author is partially supported by NSF grant DMS 2331037.

\section{Non-linear Poincar\'e inequalities, and geometric embeddings}\label{Poincare}

In this section, we apply
the non-linear Poincar\'e inequalities
mentioned in the introduction, to prove Propositions~\ref{prop:P lp}
and~\ref{prop:P general}.
Note that applications
of the Poincar\'e inequalities to {\it bi-Lipschitz}
embeddings of graphs into normed spaces
is a well established research direction (see, in particular,
\cite[Section~6]{Eskenazis2022}).
The geometric embeddings we consider in this note
are conceptually quite different.
At the same time, we would like to emphasize that
the statements proved in this section are simple
corollaries of known facts, and are not the main results of this note. \\

In order to make use of the Poincar\'e inequalities,
we utilize the following:
\begin{lemma}
Let parameters $n,\Delta$, and $d$ satisfy
$n\geq 2^{d+1}(\Delta+1)$.
Further, let $\Gamma$ be a $\Delta$--regular
$n$--vertex geometric graph in a normed space $X$
of dimension $d$, and denote the vertices by $x_1,x_2,\dots,x_n$. Then
$$
\frac{1}{n^2}\sum_{i,j=1}^n \|x_i-x_j\|_X
> \frac{1}{4}\Big(\frac{n}{2(\Delta+1)}\Big)^{1/d}.
$$
\end{lemma}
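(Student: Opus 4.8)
The plan is to lower bound the average distance by a standard volumetric (packing) argument. First I would fix a small radius $r>0$ to be chosen later, and consider the open balls $B(x_i, r/2)$ of radius $r/2$ (in the norm $\|\cdot\|_X$) centered at the vertices. Since $\Gamma$ is a geometric graph with $\Delta$-regular structure, each vertex $x_i$ has at most $\Delta$ neighbors within distance $1$; more usefully, I want to say that not too many of the $x_j$ can be within distance $r$ of a given $x_i$ once $r\le 1$, because all such $x_j$ would be pairwise either adjacent or within distance $2r\le 2$. The cleanest route: observe that any subset of vertices contained in a ball of radius $1/2$ forms a clique in $\Gamma$ (any two such points are within distance $1$), hence has size at most $\Delta+1$. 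So for each $i$, the ball $B(x_i,1/2)$ contains at most $\Delta+1$ of the points $x_1,\dots,x_n$.

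Next I would use this local sparsity to extract a large well-separated subset. Greedily pick a maximal subset $I\subseteq[n]$ of vertices that are pairwise at distance greater than $1/2$; maximality means every $x_j$ lies within distance $1/2$ of some chosen $x_i$, and by the clique bound each chosen point "covers" at most $\Delta+1$ vertices, so $|I|\ge n/(\Delta+1)$. Actually, to get the constant $2$ in the statement I would instead separate at a scale $\rho$ and track the bookkeeping carefully: a $\rho$-separated set $I$ with $\rho\le 1/2$ still has each $\rho$-ball containing $\le\Delta+1$ points, giving $|I|\ge n/(\Delta+1)$, and the balls $B(x_i,\rho/2)$, $i\in I$, are pairwise disjoint. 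Then by translation-invariance of Lebesgue measure on $X\cong\mathbb{R}^d$ and homogeneity of the norm, all these balls have equal volume $(\rho/2)^d\Vol(B_X)$, and their union sits inside $B(x_{i_0}, D+\rho/2)$ where $D=\max_{i,j}\|x_i-x_j\|_X$ is the diameter. Comparing volumes: $|I|\,(\rho/2)^d \le (D+\rho/2)^d$, so $D \ge \frac{\rho}{2}\big(|I|^{1/d}-1\big)\ge \frac{\rho}{2}\big((n/(\Delta+1))^{1/d}-1\big)$.

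Finally I would convert the diameter bound into the stated average-distance bound. The condition $n\ge 2^{d+1}(\Delta+1)$ gives $(n/(\Delta+1))^{1/d}\ge 2$, so $|I|^{1/d}-1\ge \tfrac12|I|^{1/d}\ge\tfrac12(n/(2(\Delta+1)))^{1/d}\cdot 2^{1/d}\ge\ldots$; with $\rho$ taken close to $1/2$ this already yields $D \gtrsim \big(n/(2(\Delta+1))\big)^{1/d}$ up to a constant. To pass from the diameter to the average I would note that a constant fraction of pairs are "far": e.g., the largest ball of radius $D/4$ around any single vertex cannot contain all vertices once $(n/(\Delta+1))^{1/d}$ is large (again by the packing bound at scale comparable to $D$), so at least, say, half the vertices lie outside $B(x_{i_0}, D/4)$ for a well-chosen center, forcing $\ge n^2/8$ ordered pairs at distance $\ge D/4$; hence $\frac{1}{n^2}\sum_{i,j}\|x_i-x_j\|_X \ge \frac{1}{8}\cdot\frac{D}{4}$, and plugging in the diameter bound and absorbing constants gives the claimed $\frac14(n/(2(\Delta+1)))^{1/d}$. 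The main obstacle is purely the constant-chasing: getting the clean factor $\frac14$ and the $2(\Delta+1)$ inside the root requires choosing the separation scale and the "far pairs" fraction so that all the $2^{1/d}$-type corrections are absorbed using the hypothesis $n\ge 2^{d+1}(\Delta+1)$, rather than any conceptual difficulty — the packing argument itself is routine.
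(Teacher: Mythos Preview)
Your ingredients are right --- the clique observation (any ball of radius $1/2$ contains at most $\Delta+1$ of the $x_j$) together with a volumetric packing bound is exactly what the paper uses --- but the route you take through the diameter introduces a genuine gap at the end.

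The problematic step is the conversion ``diameter $\Rightarrow$ average''. From a \emph{single} well-chosen center $x_{i_0}$ with $n/2$ vertices outside $B(x_{i_0},D/4)$ you get only order $n$ far ordered pairs, not $n^2/8$; and in general there is no way to pass from a diameter lower bound to an average-distance lower bound of the same order (think of $n-1$ points clustered together and one outlier). What you actually need is the packing bound \emph{at every center}: for each $i$, at most $n/2$ of the $x_j$ lie within a suitable radius of $x_i$. You hint at this (``again by the packing bound at scale comparable to $D$''), but once you carry it out, the diameter bound becomes irrelevant.

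This is precisely the paper's argument, done directly and with no detour. The paper observes that the indicator sum $\sum_j \mathbf{1}_{x_j+B}$ is pointwise at most $\Delta+1$ (this is your clique bound, phrased as a multiplicity estimate), and then integrates over $x_i+(t+1)B$ to get
\[
|\{j:\ x_j\in x_i+tB\}|\ \le\ (\Delta+1)\,\frac{\Vol((t+1)B)}{\Vol(B)}\ \le\ (\Delta+1)(2t)^d
\]
for every $i$ and every $t\ge 1$. Choosing $t=\tfrac12\big(n/(2(\Delta+1))\big)^{1/d}$ (which is $\ge 1$ by the hypothesis $n\ge 2^{d+1}(\Delta+1)$) makes the right-hand side equal to $n/2$, so for each $i$ at least $n/2$ indices $j$ satisfy $\|x_i-x_j\|_X>\tfrac12\big(n/(2(\Delta+1))\big)^{1/d}$. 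Summing over $i$ gives the stated average bound with the clean constant $\tfrac14$, with no separated-subset extraction and no diameter step. Your approach can be repaired by replacing the single-center claim with this per-center bound, but then steps 2--3 of your outline are no longer needed.
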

\begin{proof}
Without loss of generality, $X=\RR^d$ endowed with a norm $\|\cdot\|_X$.
Define $B:=\frac{1}{2}\big\{x\in X:\;\|x\|_X\leq 1/2\big\}$,
and observe that, in view of $\Delta$--regularity
and the definition of a geometric graph, for every $i\leq n$
we have
$$
\big|\big\{j\neq i:\;(x_j+B)\cap(x_i+B)\neq\emptyset\big\}\big|
\leq \Delta.
$$
Fix any $t\geq 1$ and any index $i\leq n$.
By the above inequality,
the sum of indicator functions
$$
\sum_{j=1}^n {\bf 1}_{x_j+B}
$$
is at most $\Delta+1$ everywhere on $x_i+(t+1)\,B$.
Hence,
$$
|\{j\leq n:\;x_j\in x_i+t B\}|
\leq \frac{(\Delta+1)\Vol_d((t+1)B)}{\Vol_d(B)}
\leq(\Delta+1)(2t)^d.
$$
Thus, taking $t:=\frac{1}{2}\big(\frac{n}{2(\Delta+1)}\big)^{1/d}\geq 1$,
we get
$$
\|x_j-x_i\|_X> \frac{1}{2}\Big(\frac{n}{2(\Delta+1)}\Big)^{1/d}
$$
for at least $n/2$ indices $j\leq n$.
The result follows.
\end{proof}

As an immediate corollary of the above lemma,
we obtain the following quantitative version of \eqref{eq: suf condition}:
\begin{proposition}\label{prop:dist}
Let $G$ be a $\Delta$--regular graph on $n$
vertices, and $X$ be a $d$--dimensional normed space,
where the parameters $n,\Delta$, and $d$ satisfy
$n\geq 2^{d+1}(\Delta+1)$.
Assume further that $\gamma(G,\|\cdot\|_X)\leq
\frac{1}{2}\big(\frac{n}{2(\Delta+1)}\big)^{1/d}$.
Then $G$ does not admit a geometric embedding into $X$.
\end{proposition}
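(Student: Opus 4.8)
The plan is to simply combine the previous lemma with the definition of the non-linear Poincaré constant. Suppose toward a contradiction that $G$ \emph{does} admit a geometric embedding into $X$, realized by a geometric graph $\Gamma$ with vertices $x_1,\dots,x_n \in X$; since $\Gamma$ is isomorphic to $G$, it is $\Delta$-regular with adjacency relation $i \sim j$ matching that of $G$. On the one hand, because $\Gamma$ is a geometric graph, every edge $\{i,j\}$ satisfies $\|x_i - x_j\|_X \leq 1$, so the right-hand side of the Poincaré inequality \eqref{eq:Poincare} (with $p=1$) is bounded above: $\frac{\gamma(G,\|\cdot\|_X)}{\Delta n}\sum_{i\sim j}\|x_i-x_j\|_X \leq \gamma(G,\|\cdot\|_X)$, since there are $\Delta n$ ordered adjacent pairs. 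Hence by \eqref{eq:Poincare},
$$
\frac{1}{n^2}\sum_{i,j=1}^n \|x_i - x_j\|_X \leq \gamma(G,\|\cdot\|_X).
$$

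On the other hand, the hypotheses $n \geq 2^{d+1}(\Delta+1)$ and $\Delta$-regularity of $\Gamma$ put us exactly in the setting of the preceding lemma, which gives the strict lower bound
$$
\frac{1}{n^2}\sum_{i,j=1}^n \|x_i - x_j\|_X > \frac{1}{4}\Big(\frac{n}{2(\Delta+1)}\Big)^{1/d}.
$$
Combining the two displays yields $\gamma(G,\|\cdot\|_X) > \frac{1}{4}\big(\frac{n}{2(\Delta+1)}\big)^{1/d}$, which directly contradicts the standing assumption $\gamma(G,\|\cdot\|_X) \leq \frac{1}{2}\big(\frac{n}{2(\Delta+1)}\big)^{1/d}$ (note $\tfrac14 < \tfrac12$, so the contradiction is comfortable and there is no borderline case to worry about). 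Therefore no such embedding exists.

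There is no real obstacle here — the only points requiring a line of care are (i) checking that an embedding being an \emph{isomorphism} transfers $\Delta$-regularity and the adjacency structure to the point configuration, so that the edge set of $\Gamma$ coincides with $\{(i,j): i\sim j\}$ and the lemma applies; and (ii) correctly normalizing the edge sum, i.e.\ that $\sum_{(i,j):\,i\sim j} 1 = \Delta n$ for a $\Delta$-regular graph on $n$ vertices with ordered pairs, so that the bound $\|x_i-x_j\|_X\leq 1$ on edges collapses the right side of \eqref{eq:Poincare} to exactly $\gamma(G,\|\cdot\|_X)$. Both are routine. The proposition is really just the assembly of \eqref{eq:Poincare} and the lemma, which is why the excerpt presents it as an immediate corollary.
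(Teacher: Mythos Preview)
Your approach is exactly the intended one---the paper presents the proposition as an immediate corollary of the preceding lemma, and your combination of the Poincar\'e inequality \eqref{eq:Poincare} with the lemma is the right assembly. However, the final step contains a genuine logical error. You derive $\gamma(G,\|\cdot\|_X) > \frac{1}{4}\big(\frac{n}{2(\Delta+1)}\big)^{1/d}$ and then claim this contradicts the hypothesis $\gamma(G,\|\cdot\|_X) \leq \frac{1}{2}\big(\frac{n}{2(\Delta+1)}\big)^{1/d}$. But these two statements are perfectly compatible: any value of $\gamma$ strictly between $\tfrac{1}{4}A$ and $\tfrac{1}{2}A$, where $A=\big(\frac{n}{2(\Delta+1)}\big)^{1/d}$, satisfies both. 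Your parenthetical ``note $\tfrac14<\tfrac12$, so the contradiction is comfortable'' has the logic exactly reversed: for a lower bound $\gamma > c_1 A$ to contradict an upper bound $\gamma \leq c_2 A$ one needs $c_1 \geq c_2$, not $c_1 < c_2$.

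What you have actually uncovered is a constant mismatch between the lemma (which yields the factor $\tfrac{1}{4}$) and the proposition as stated (which assumes only $\gamma \leq \tfrac{1}{2}A$). Your argument goes through verbatim if the hypothesis is tightened to $\gamma(G,\|\cdot\|_X) \leq \tfrac{1}{4}\big(\frac{n}{2(\Delta+1)}\big)^{1/d}$; with the constant $\tfrac{1}{2}$ as printed, the proposition does not follow from the lemma. You should flag this discrepancy rather than declare a contradiction that is not there.
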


It was shown in \cite{Naor2021}
(see also survey \cite[Theorem~1.3]{Eskenazis2022})
that for any $\Delta$--regular $n$--vertex
expander graph $G_n$ and any $d$--dimensional
normed space $X$, the Poincar\'e constant
$\gamma(G_n,\|\cdot\|_X)$ in \eqref{eq:Poincare}
is bounded above by $c\log d$, where
$c>0$ may only depend on the spectral gap of $G_n$.
If $\Delta\geq 3$ and
if $G_n$ is a uniform random $\Delta$--regular
graph on $n$ vertices then $G_n$ is an expander with
high probability \cite{Friedman2008},
and hence $\gamma(G_n,\|\cdot\|_X)=O(\log d)$.
Combined with Proposition~\ref{prop:dist},
this implies $G_n$ is not embeddable into $X$ w.h.p
whenever
$$
\log d\leq c\Big(\frac{n}{2(\Delta+1)}\Big)^{1/d},
$$
where $c>0$ may only depend on $\Delta$.
Solving the last inequality for $d$
leads to Proposition~\ref{prop:P general}
from the introduction.

\bigskip

We would like to emphasize that
the above approach via Proposition~\ref{prop:dist}
cannot be used to verify the statement
of Theorem~\ref{thm:main}.
Perhaps the most natural example is the
space $\ell_\infty^d$. While this is briefly discussed in
\cite[Section~6]{Eskenazis2022}, let us briefly sketch
the basic idea, originating from \cite{Matousek1997}, for completeness.
Let $G_n$ be the uniform random $\Delta$--regular
graph on $n$, and let $d\leq \log n$.
Pick $d$ vertices $i_1,i_2,\dots,i_d$
of $G_n$ uniformly at random
and independently of the edge set of $G_n$,
and define a mapping from $[n]$ to $\ell_\infty^d$
as
$$
i\in[n]\longrightarrow (\dist_{G_n}(i,i_k))_{k=1}^d,
$$
where $\dist_{G_n}(\cdot)$ is the
usual graph distance on $G_n$.
It is immediate from the definition that for any pair of adjacent
vertices $i\sim j$ in $G_n$ we have
$$
\big\|(\dist_{G_n}(i,i_k))_{k=1}^d
-(\dist_{G_n}(j,i_k))_{k=1}^d\big\|_\infty\leq 1.
$$
On the other hand, it can be shown that
for two independently uniformly chosen vertices $i,j$, with a constant
probability
$$
\big\|(\dist_{G_n}(i,i_k))_{k=1}^d
-(\dist_{G_n}(j,i_k))_{k=1}^d\big\|_\infty=\Omega(\log d).
$$
Therefore, $\gamma(G_n,\|\cdot\|_\infty)=\Omega(\log d)$,
and hence the above method using Proposition~\ref{prop:dist} fails to produce the optimal
(logarithmic in $n$)
bound for non-embeddability of $G_n$ into an arbitrary normed space.

\section{Construction of the discretization scheme}

The goal of this section is to produce, for a given finite-dimensional normed space $X$, a discretization $\hx:X^n \to \net$, where $\net$ is a discrete subset of the Cartesian product $X^n$ having both relatively small cardinality and ``sufficient density.'' The construction presented below can be viewed as a  version of the classical net argument with advanced features provided through multiscaling and seeding.

For the remainder of this article, $\eps$, $c_{0}$, $c_{\text{\tiny\ref{thm:main}}}$, $C_{\mathrm{BM}}$ and $\diam$ are some universal constants. In particular, these constants do not depend on $X$, $\Delta$, or $n$. Without making effort to optimize, the following choices suffice:
\begin{align*}
    \eps &:= .1 \\
    \diam &:= 8 \\
    c_0 &:= .01 \\
    C_{\mathrm{BM}} &:= 10 \\
    c_{\text{\tiny\ref{thm:main}}} &:= \min \pa{ \frac{\eps}{2\log(320)},\, \frac{\eps \log(3/c_0)}{10},\, \frac {C_{\mathrm{BM}}} 2} \ge .001 \,.
\end{align*}
We will think of a normed space $X$ as the space $\RR^d$ endowed with a norm $\|\cdot\|_X$. The unit ball in $X$ will be denoted by $B_X$. More generally, a ball of radius $r$ centered at a point $y\in X$ will be denoted by $B_X(y,r)$.
Given a point $y\in X$ and a non-empty closed subset $A$ of $X$,
the $\|\cdot\|_X$--projection of $y$ onto $A$ is any point in $A$ with the minimal $\|\cdot\|_X$--distance to $y$.
Whenever projections of points onto subsets are not uniquely defined, a representative will be chosen arbitrarily. Finally, ``$n$ sufficiently large'' below means $n \ge n_0$ for some function $n_0 := n_0(\eps, \diam, c_0, c_{\text{\tiny\ref{thm:main}}}, \Delta)$. We emphasize that $n_0$ does not depend on the underlying normed space $X$. However, as $n_0$ \textit{may} depend on $\Delta$ (which is, for our purposes, a fixed constant), we may assume $n$ is arbitrarily large compared to $\Delta$.

\begin{definition}[$n$--tuples]
In what follows, $\vx$ will always denote an $n$-tuple of points $(x_1,\dots,x_n) \in (\RR^d)^n$. Occasionally, we will view $\vx$ as a multiset (i.e disregard the ordering).
\end{definition}

\begin{definition}[Sparse $n$--tuples]
    Let $\|\cdot\|_X$ be any norm in $\RR^d$.
    Say that a tuple of points $\vx$ is {\it $(1/2,\,\Delta)$-sparse} if, for all $i\in[n]$, $\|x_i - x_j\|_X\le 1/2$ for at most $\Delta$ values of $j \in [n]\setminus \cb{i}$.
\end{definition}

As will be made precise shortly, we limit our search for embeddings of graphs into $n$-tuples $\vx$ that are sparse and have small diameter. The diameter restriction is in view of the following classical fact:
\begin{lemma}[\cite{diameter}]\label{lemma:diameter}
    There exists a universal constant $\diam > 0$ such that for any $\Delta \ge 3$, with high probability as $n$ diverges, an $n$-vertex $\Delta$-regular random graph has diameter at most $\diam\log(n)/2$. 
\end{lemma}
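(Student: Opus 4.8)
The plan is the classical neighborhood-expansion (``ball-growth'') argument; I would present its shortest implementation and then indicate a more self-contained variant. First I would pass to the configuration model: a uniform random $\Delta$-regular simple graph arises from a uniformly random perfect matching of $\Delta n$ half-edges (partitioned into $n$ cells of size $\Delta$) by conditioning on the resulting multigraph being simple, and the probability of simplicity is bounded below by a positive constant depending only on $\Delta$; hence it suffices to prove the diameter bound with probability $1-o(1)$ in the configuration model, since any event of probability $o(1)$ there remains $o(1)$ after conditioning on simplicity.

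Next I would invoke the classical fact that with probability $1-o(1)$ the graph is an edge expander: there is a universal $\delta_0>0$, independent of $\Delta\ge3$ and of $n$, such that every vertex set $S$ with $0<|S|\le n/2$ satisfies $e(S,V\setminus S)\ge\delta_0\,\Delta\,|S|$. (This is a first-moment computation in the configuration model; that $\delta_0$ may be taken uniform over $\Delta\ge3$ reflects the fact that the expansion of random $\Delta$-regular graphs does not deteriorate as $\Delta$ grows.) On this event, fix a vertex $u$ and let $B_r(u)$ denote the set of vertices at graph-distance at most $r$ from $u$. Whenever $|B_r(u)|\le n/2$, each of the $\ge\delta_0\Delta|B_r(u)|$ edges leaving $B_r(u)$ has its far endpoint in $B_{r+1}(u)\setminus B_r(u)$, and each vertex there meets at most $\Delta$ of these edges, so $|B_{r+1}(u)|\ge(1+\delta_0)\,|B_r(u)|$. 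Iterating from $|B_0(u)|=1$ shows $|B_{r^\ast}(u)|>n/2$ for some $r^\ast=O(\log n)$ with implied constant uniform in $\Delta\ge3$. Since $|B_{r^\ast}(u)|>n/2$ and $|B_{r^\ast}(v)|>n/2$ for any two vertices $u,v$, these two subsets of the $n$-element vertex set must intersect; taking $w$ in the intersection gives $\dist(u,v)\le\dist(u,w)+\dist(w,v)\le 2r^\ast$. Hence the diameter is at most $2r^\ast\le\diam\log(n)/2$ for a suitable universal constant $\diam$ (any $\diam\ge 5/\log(1+\delta_0)$ suffices once $n$ is large), which proves the lemma.

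The only genuine obstacle arises if one wants to avoid citing the expansion estimate and argue from scratch. The classical substitute is a direct configuration-model exploration: reveal the matching by breadth-first search from $u$; while fewer than, say, $n^{3/5}$ vertices have been discovered only $O(n^{3/5})$ half-edges have been revealed, so a freshly revealed half-edge pairs inside the already-explored region --- a ``collision'' --- with conditional probability $O(n^{-2/5})$, whence, stochastically dominating the dependent collision indicators by i.i.d.\ Bernoulli variables and applying a Chernoff bound, the number of collisions incurred before the ball reaches size $n^{3/5}$ is $o(n^{3/5})$ except with probability $e^{-n^{\Omega(1)}}$. On that event the BFS levels grow geometrically by a factor at least $\Delta-1\ge2$, so a ball of polynomial size --- with polynomially many unrevealed incident half-edges --- is reached within $O(\log n)$ steps, uniformly over $\Delta\ge3$ (the worst case being $\Delta=3$, where $0.6\,\log n/\log 2<\log n$); two such balls are then joined by revealing one further BFS layer and ``sprinkling'' a half-edge from one into the other, and the failure probability $e^{-n^{\Omega(1)}}$ comfortably absorbs the union bounds over the $n$ source vertices and the $\le n^2$ pairs of vertices. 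Carrying out this collision bookkeeping with a failure probability small enough for the union bounds is the crux of that route; it has the advantages of being elementary and of yielding a sharper admissible $\diam$, of order $1$ (matching \cite{diameter}).
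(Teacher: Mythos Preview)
The paper does not give its own proof of this lemma; it is simply stated with attribution to \cite{diameter} and used as a black box. There is therefore nothing to compare your argument against on the paper's side.

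Your proposal is a correct and standard route. The first argument (edge expansion $\Rightarrow$ geometric ball growth $\Rightarrow$ any two balls of size exceeding $n/2$ must intersect) is the cleanest way to get a logarithmic diameter with a constant uniform in $\Delta\ge 3$; the only point worth a remark is that the uniformity of $\delta_0$ over $\Delta$ is most easily seen via the spectral gap (Friedman) together with the discrete Cheeger inequality, rather than a bare first-moment computation, since the latter requires some care at $\Delta=3$. Your second, exploration-based variant is essentially the argument behind the cited reference and, as you note, recovers the sharper constant. For the paper's purposes only the crude bound with some universal $\diam$ is needed, and either of your arguments delivers this.
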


\begin{definition}[Domain]
Assume that $X$ is a normed space.
Let domain $\tuples=\tuples(X)$ be given by
\begin{equation}\label{eq:domain}
    \tuples=\tuples(X) := \cb{\vx \in (B_{X}(0;\, \diam \log n ))^n\,:\, \vx \text{ is $(1/2,\,\Delta)$-sparse in $X$}}\,.
\end{equation}
Note that $\tuples$ implicitly depends on parameters $n,\Delta$.
\end{definition}

The discretization of $X$ will be based on a multiscale net-argument with
seeding.
Recall that an {\it $r$-net} on a set $A \subset \RR^{d}$ in $X=(\RR^{d},\|\cdot\|_X)$  is a collection of points $(y_i)_i$ so that for all $x \in A$, there exists $y_i$ with $\|x - y_i\|_X \le r$. We will always assume that the points $y_i$ are contained in $A$ for convenience. We introduce the following notation: 
\begin{definition}[Simple nets]
    Given $r>0$ and $y\in X$, define $\net(y, r; c_0r)$ as an (arbitrary) $c_0r$-net in $B_X(y; r)$ of minimum cardinality. Further, let $\net_0$ denote an arbitrary $1$-net in $B_X(0; \diam \log n)$ of minimum cardinality.
\end{definition}

\begin{definition}[Dyadic sequence]
For every integer $\ell$ define $r_\ell := 2^{\ell}$.
\end{definition}

\begin{definition}[Multiscale net]\label{def:mulitscale-net}
    Let $X$ be a normed space. We define the {\it multiscale net} $\net_M$ by:
    \[
        \net_M := \net_0 \cup \pa{ \bigcup_{y \in \net_0} \bigcup_{\ell = 0}^{\lceil\log_2 (2\diam \log n)\rceil} \net(y,\, r_\ell; c_0 r_\ell) } \,.
    \]
\end{definition}
In what follows, we will use points from $\net_M$ to approximate locations
of vertices in a geometric graph embedding into $X$.

\begin{definition}[Scale of separation]
    Given $\vx = (x_1,\dots,x_n) \in (\RR^{d})^n$,
    define the \textit{local scale of separation for $x_i$} by:
    \[
        \ell_i := \ell_i(\vx,X) = \min\cb{\ell \in \mathbb{Z}\,:\, \ba{\cb{j\in [n]\,:\, \|x_i-x_j\|_X \le r_\ell }} \ge n^{2\eps}}\,, \quad 
        1\leq i\leq n\,.
    \]
    Informally, for every $i$, the local scale of separation for $x_i$ is the logarithm of the radius of a ball in $X$ centered at $x_i$ containing about $n^{2\eps}$ points $x_j$. We further write $\ell(\vx)=\ell(\vx,X)$ for the tuple $(\ell_i)_{i=1}^n$.
\end{definition}

\begin{remark}\label{boundsonell}
Note that whenever $\vx$ is $(1/2,\,\Delta)$-sparse and $n$ is sufficiently large, we necessarily have $\ell_i\geq 0$ for all $i\leq n$.
Moreover, whenever $\vx\in (B_{X}(0,r))^n$ for some $r>0$, we must have $\ell_i\leq \lceil\log_2(2r)\rceil$ for all $i$.
In particular, for every $\vx=(x_i)_{i=1}^n\in\tuples$
    we have $\ell_i\in \{0,1,\dots, \lceil\log_2(2\diam \log n)\rceil\}$
    for all $i\leq n$.
\end{remark}

The following simple property says that,
assuming the appropriate choice of the constants involved,
for every $\vx \in \tuples$ a vast majority of pairs
of components $x_i,x_j$ are at large distance from each other.
\begin{lemma}[Lower-bound on scales of separation]\label{lemma:scale-lb}
For all $\vx \in \tuples$ and all $i \in [n]$,
\[
    \ba{\cb{j \in [n] \,:\, \|x_i - x_j\|_X \le 36}  } < n^{2\eps}\,.
\]
In particular, $r_{\ell_i} \geq 36$ for all $i \in [n]$. 
\end{lemma}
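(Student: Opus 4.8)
The plan is to run a volumetric packing argument, close in spirit to the (unnamed) volume lemma of Section~\ref{Poincare}, with $(1/2,\Delta)$-sparsity playing the role of the bounded-overlap hypothesis. Fix $\vx=(x_1,\dots,x_n)\in\tuples$ and $i\in[n]$, and write $S:=\cb{j\in[n]:\|x_i-x_j\|_X\le 36}$; the goal is to show $|S|<n^{2\eps}$, after which the ``in particular'' clause will follow immediately.

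First I would record two properties of the balls $B_X(x_j,1/4)$, $j\in S$. They all lie inside $B_X(x_i,37)$, since $\|x_i-x_j\|_X\le 36$ for $j\in S$ and $1/4+36\le 37$. Moreover, no point of $\RR^d$ belongs to more than $\Delta+1$ of them: if $y\in B_X(x_j,1/4)\cap B_X(x_k,1/4)$ then $\|x_j-x_k\|_X\le 1/2$ by the triangle inequality, so after fixing one index $j_0$ with $y\in B_X(x_{j_0},1/4)$, every other index $j$ with $y\in B_X(x_j,1/4)$ lies within $\|\cdot\|_X$-distance $1/2$ of $x_{j_0}$, and by $(1/2,\Delta)$-sparsity there are at most $\Delta$ such indices besides $j_0$. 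Hence $\sum_{j\in S}{\bf 1}_{B_X(x_j,1/4)}\le\Delta+1$ pointwise on $\RR^d$.

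Then I would integrate this pointwise bound over $B_X(x_i,37)$ and use $\Vol_d(B_X(y,r))=r^d\,\Vol_d(B_X)$, which gives
\[
|S|\,4^{-d}\,\Vol_d(B_X)\le(\Delta+1)\,37^d\,\Vol_d(B_X),
\qquad\text{i.e.,}\qquad
|S|\le(\Delta+1)\,148^d .
\]
To conclude, I would invoke the choice of constants: since $d\le c_{\text{\tiny\ref{thm:main}}}\log n\le\frac{\eps}{2\log(320)}\log n$ we get $148^d\le 320^d=e^{d\log 320}\le n^{\eps/2}$, while $\Delta+1\le n^{\eps/2}$ holds for all $n$ large in terms of $\Delta$ (which is permitted, as $n_0$ may depend on $\Delta$). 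Multiplying, $|S|\le n^{\eps}<n^{2\eps}$, as desired. For the last sentence: any integer $\ell$ with $r_\ell=2^\ell<36$ satisfies $\cb{j:\|x_i-x_j\|_X\le r_\ell}\subseteq S$, so the count there is at most $|S|<n^{2\eps}$; hence the minimal $\ell$ in the definition of $\ell_i$ must have $r_{\ell_i}\ge 36$.

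The only thing to watch is the bookkeeping of the three numerical constants — the overlap bound $\Delta+1$, the radius ratio giving $148=4\cdot 37$, and the exponent $\eps/2$ — but these are already reconciled by the definition of $c_{\text{\tiny\ref{thm:main}}}$ (the constant $320$ there is placed precisely for a step of this kind), so I do not anticipate any genuine obstacle.
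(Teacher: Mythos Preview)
Your proof is correct and follows essentially the same volumetric argument as the paper. The only cosmetic difference is that the paper phrases the bounded-overlap step through a covering/packing duality (a covering of $B_X(x_i,36)$ by $1/4$-balls has cardinality at least $|S|/(\Delta+1)$, hence that many disjoint $1/8$-balls fit in $B_X(x_i,37)$, giving $|S|/(\Delta+1)\le 296^d$), whereas you integrate the pointwise inequality $\sum_{j\in S}{\bf 1}_{B_X(x_j,1/4)}\le \Delta+1$ directly to get $|S|\le(\Delta+1)\cdot 148^d$; both land safely under the $320^d\le n^{\eps/2}$ budget.
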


\begin{remark}
It is easy to verify from the proof below that the constant $36$ in the lower bound for $r_{\ell_i}$ can be replaced with arbitrary fixed number, by adjusting $c_d$ accordingly.    
\end{remark}

\begin{proof}
    As $\Delta$ is fixed, we may assume $n$ is large enough that $\Delta+1 \le n^{\eps/2}$. Additionally, $d \le c_d \log n$ by assumption.
    Assume for the sake of contradiction there is a $(1/2,\,\Delta)$-sparse tuple $\vx$ in $(B_{X}(0;\, \diam \log n ))^n$ and an index $i \in [n]$ such that
    \[
        \ba{\cb{j \in [n] \,:\, \|x_i - x_j\|_X \le 36}  } \ge n^{2\eps}\,.
    \]
    Note that any covering of $B_{X}(x_i, 36)$ using translate of $B_X(0,\, 1/4)$ must have cardinality at least $n^{\eps}/(\Delta+1)$. (By the pigeonhole principle, any smaller covering would have a ball with more than $\Delta+1$ points from $\vx$, violating the $(1/2,\,\Delta)$-sparse assumption on $\vx$). By the standard duality between packing and covering, this implies $B_{X}(x_i, 37)$ contains as a subset at least $n^{\eps}/(\Delta+1)$ disjoint copies of $B_X(0, 1/8)$. By volumetric considerations, we obtain:
    \[
        \frac{n^{\eps}}{\Delta+1}\, 8^{-d} \le 37^d\,.
    \]
    However, by our choices of parameters, $320^d \le e^{\eps \log(n)/2} \le n^{\eps}/(\Delta+1)$, providing the desired contradiction. Finally, note that by construction of $\ell_i$, we have $\ell_i \geq \log_2(36)$, so that $r_{\ell_i} \geq 36$.
\end{proof}

In the next proposition we define {\it seeds}. Seeds are
small-cardinality subsets of $\vx$ that play a key role in the discretization scheme: $\vx$ will be embedded into simple (local) nets centered at the seeds. 

\begin{proposition}[Existence of good seeds]\label{prop:seeds}
    Let $n$ be sufficiently large (independent of $X$). For any $n$--tuple $\vx$, and for each integer $\ell\geq 0$, there exists a multiset $\seeds_\ell := \seeds_\ell(\vx) \subset \vx$, with $|S_\ell| =\lfloor n^{1-\eps}\rfloor$ and the following property. For any component $x_i$ of $\vx$ with $\ell_i = \ell$, there exists $y \in \seeds_\ell$ with 
    \[
        \|y - x_i \|_X \le r_{\ell_i}\,.
    \]
\end{proposition}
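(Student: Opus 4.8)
The plan is to construct $\seeds_\ell$ by a probabilistic argument: sample the seeds uniformly at random from among the components of $\vx$, and show the required covering property holds with positive probability. This is a standard covering/coupon-collector estimate, and the whole content is in matching the two exponents $2\eps$ and $1-\eps$.

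First I would fix an integer $\ell \ge 0$ and the tuple $\vx$, set $m := \lfloor n^{1-\eps}\rfloor$, and let $T_\ell := \cb{i \in [n] : \ell_i = \ell}$ be the set of indices whose local scale of separation equals $\ell$ (recall $m \le n$ since $\eps>0$, so all selections below are well-defined). The key point is purely definitional: for every $i \in T_\ell$, the definition of $\ell_i$ gives $\ba{\cb{j \in [n] : \|x_i - x_j\|_X \le r_\ell}} \ge n^{2\eps}$, i.e.\ the ball $B_X(x_i; r_\ell)$ contains at least $n^{2\eps}$ of the points $x_j$. Hence, to guarantee a seed $y$ with $\|y - x_i\|_X \le r_{\ell_i} = r_\ell$, it suffices that $\seeds_\ell$ contain one of these $\ge n^{2\eps}$ nearby components $x_j$.

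Next I would let $J$ be a uniformly random subset of $[n]$ of size $m$ and put $\seeds_\ell := (x_j)_{j\in J}$, viewed as a sub-multiset of $\vx$, so $|\seeds_\ell| = m = \lfloor n^{1-\eps}\rfloor$ as required. Fix $i \in T_\ell$ and write $k_i \ge n^{2\eps}$ for the number of $j$ with $\|x_i - x_j\|_X \le r_\ell$. Then the probability that $J$ misses all such $j$ is
\[
    \frac{\binom{n-k_i}{m}}{\binom{n}{m}} = \prod_{t=0}^{m-1}\frac{n-k_i-t}{n-t} \le \Big(1-\frac{k_i}{n}\Big)^m \le \exp\Big(-\frac{k_i m}{n}\Big) \le \exp\big(-n^{2\eps-1}m\big) \le \exp\big(-n^{\eps}/2\big),
\]
where the last step uses $m \ge n^{1-\eps}/2$ for $n$ large. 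A union bound over the at most $n$ indices in $T_\ell$ shows that with probability at least $1 - n\exp(-n^\eps/2)$, which is positive (indeed $1-o(1)$) for $n$ sufficiently large, the multiset $\seeds_\ell$ simultaneously covers every $x_i$ with $\ell_i = \ell$ within distance $r_\ell$. Any such realization of $J$ yields the desired seed multiset; if $T_\ell = \emptyset$ one simply takes an arbitrary $m$-element sub-multiset of $\vx$.

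I do not anticipate a genuine obstacle; the only minor points to check are (i) that $\seeds_\ell$ is a legitimate sub-multiset of $\vx$ of exactly the prescribed cardinality, handled by sampling indices without replacement together with $\lfloor n^{1-\eps}\rfloor \le n$, and (ii) that the failure probability $n\exp(-n^\eps/2)$ tends to $0$, which holds because $\eps$ is a fixed positive constant and $n$ is assumed large relative to the other parameters. The elementary inequality $\binom{n-k}{m}/\binom{n}{m} \le (1-k/n)^m$ is the only computation, and the exponent $n^{2\eps-1}m$ diverges precisely because of the slack between the count $n^{2\eps}$ of points inside $B_X(x_i;r_\ell)$ and the smaller sample size $n^{1-\eps}$ — this gap, guaranteed by $3\eps < 1$, is the numerical reason the argument closes.
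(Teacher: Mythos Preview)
Your proof is correct and follows essentially the same probabilistic-method argument as the paper's: sample $\lfloor n^{1-\eps}\rfloor$ seeds uniformly from $\vx$, use the definition of $\ell_i$ to guarantee $\ge n^{2\eps}$ nearby components, bound the miss probability by $e^{-\Theta(n^\eps)}$, and union-bound over $i$. The only differences are cosmetic---the paper samples independently with replacement while you sample indices without replacement---and your closing remark that the condition $3\eps<1$ is what makes the exponent diverge is slightly misstated (only $\eps>0$ is needed, since $n^{2\eps-1}m \sim n^{\eps}$), but this does not affect the validity of the argument.
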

\begin{proof}
    Fix $\vx$ and $\ell$. Let $\seeds_\ell$ be the collection of $\lfloor n^{1-\eps}\rfloor$ points selected independently uniformly at random from $\vx$ (viewed as a multiset). Let $z \sim \mathrm{unif}(\vx)$. For all $x_i \in \vx$, we have:   
    \begin{align*}
        \PP{\bigcap_{y \in S_\ell} \cb{\|y - x_i \|_X > r_\ell}} = \PP{\|z - x_i \|_X > r_\ell}^{\lfloor n^{1-\eps}\rfloor} \le \pa{1 - \frac{n^{2\eps}}{n}}^{\lfloor n^{1-\eps}\rfloor} &= e^{-\Theta(n^{\eps})} \,.
    \end{align*}
    Taking a union bound over the $n$ possible values of $i$, it easily follows that with high probability (in particular, positive probability), our random selection of $\seeds_\ell$ has the desired property. Thus, by the probabilistic method, there must deterministically be some satisfactory realization of $\seeds_\ell$.
\end{proof}

\begin{definition}[Discretization]
    We define a {\it discretization scheme} in $(\RR^d)^n$ as a mapping of every tuple $\vx \in \tuples(X)$ into $\hx := \hx(\vx)\in \net_M^n$ constructed as follows. For each $\ell \in \{0,1,\dots, \lceil\log_2(2\diam \log n)\rceil\}$, construct $S_\ell$, as defined in \cref{prop:seeds}. Let $\hat \seeds_{\ell}$ denote a multiset which is the $\|\cdot\|_{X}$-projection of $\seeds_\ell$ onto $\net_0$, and let $\hat \seeds = \bigcup_{\ell} \hat \seeds_\ell\subset \net_0$. Then, for each $i \in [n]$:
    \begin{enumerate}
        \item Let $\hat s_i$ be the $\|\cdot\|_{X}$-projection of $x_i$ onto $\hat \seeds_{\ell_i}$. 
        \item Let $\hat x_i$ be the $\|\cdot\|_{X}$-projection of $x_i$ onto $\net(\hat s_i, r_{\ell_i}; c_0 r_{\ell_i}) \subset \net_M$. 
    \end{enumerate}
    Define $\hx := (\hat x_1,\dots, \hat x_n) \in \net_M^{n}$. 
\end{definition}

\begin{remark}\label{distxitohatxi}
Note that, by the definition of $S_{\ell_i}$, the distance
from $x_i$ to $S_{\ell_i}$ is at most $r_{\ell_i}$.
Since $\net_0$ is a $1$--net, the distance from $\hat \seeds_{\ell_i}$
to $x_i$ is at most $r_{\ell_i}+1$. Thus,
$\|x_i-\hat s_i\|_X\leq r_{\ell_i}+1$, $i\leq n$.
In turn, this implies $\|x_i-\hat x_i\|_X
\leq c_0 r_{\ell_i}+1$ for all $i\leq n$.
\end{remark}

Note that the above construction 
depends on the underlying normed space $X$, and the parameters $n$ and $\Delta$. We collect some properties of this discretization.

\begin{lemma}\label{lemma:discretization} Let $n$ be sufficiently large (independently of $X$).
    \begin{enumerate}
        \item (Cardinality estimate)
        \[
            \ba{\cb{(\hx(\vx),\, \ell(\vx))\,:\, \vx \in \tuples }} \le \cE{ n \log n}\,.
        \]

        \item (Preservation of small distances)
        Let $\vx \in \tuples$ and $\hx := \hx(\vx)$. For all $(i,j) \in [n]^2$ with $\|x_i-x_j\|_X \le 2$, 
        \[
            \|\hat x_i - \hat x_j\|_X \le \frac{1}{6}\,r_{\ell_i} \,.
        \]

        \item (Preservation of local scales) Let $\vx \in \tuples$ and $\hx := \hx(\vx)$. For all $i \in [n]$, 
        \[
            \ba{\cb{j \in [n]\,:\, \|\hat x_i - \hat x_j\|_X \le \frac{1}{3}\,r_{\ell_{i}}}} \le n^{2\eps}\,.
        \]
    \end{enumerate}
\end{lemma}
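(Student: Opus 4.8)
The plan is to prove the three items in order, using the triangle inequality together with the basic distance estimates from Remark~\ref{distxitohatxi}, Lemma~\ref{lemma:scale-lb}, and the cardinality bounds on the simple nets. The key quantitative input is that $r_{\ell_i}$ is simultaneously \emph{large} (at least $36$, by Lemma~\ref{lemma:scale-lb}) and \emph{controls} the displacement $\|x_i - \hat x_i\|_X \le c_0 r_{\ell_i} + 1$, so that with $c_0 = .01$ and $r_{\ell_i}\ge 36$ one has $\|x_i-\hat x_i\|_X \le c_0 r_{\ell_i}+1 \le \tfrac{1}{30}r_{\ell_i}$, say; I will track the exact constants but they are forgiving.

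For item (1), the discretized tuple $\hx(\vx)$ is determined once we know, for each $i$, the local scale $\ell_i$ and the point $\hat x_i \in \net_M$. I would first bound $|\net_0|$ and $|\net(y,r_\ell;c_0 r_\ell)|$ by the standard volumetric covering estimate: an $r$-ball in a $d$-dimensional normed space is covered by at most $(3/c_0)^d$ balls of radius $c_0 r$, so each simple net has size at most $(3/c_0)^d \le \exp(d\log(3/c_0)) \le \exp(c_{\text{\tiny\ref{thm:main}}}\log(3/c_0)\log n)$, which is $n^{o(1)}$ by the choice of $c_{\text{\tiny\ref{thm:main}}}$; similarly $|\net_0| \le (3\diam\log n)^d = n^{o(1)}$. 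But this is not quite enough on its own — the issue is that $\net_M$ itself has cardinality only $n^{o(1)}$ but raised to the $n$-th power that is $\exp(o(n\log n))$, which is still too big if we are not careful. The cleaner route is to count the \emph{data} determining $\hx$: for each $i$ we record $\ell_i$ (one of $O(\log n)$ values), the index of $\hat s_i$ within $\hat\seeds_{\ell_i}$ (one of $\le |\seeds_{\ell_i}| \le n$ values, though $\hat\seeds_\ell$ is a \emph{fixed-cardinality} multiset of size $\lfloor n^{1-\eps}\rfloor$), and the index of $\hat x_i$ within the local net $\net(\hat s_i, r_{\ell_i}; c_0 r_{\ell_i})$ (one of $\le (3/c_0)^d = n^{o(1)}$ values). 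Hence the number of possible pairs $(\hx,\ell)$ is at most $\big(O(\log n)\cdot n \cdot n^{o(1)}\big)^n \le \exp\big(n(\log n + o(\log n))\big) \le \exp(n\log n)$ for $n$ large. Wait — I need to be careful that $\hat\seeds_\ell$ depends on $\vx$, not just on $X$; but once we pick which element of the fixed-size multiset $\seeds_\ell$ (equivalently $\hat\seeds_\ell$) is used, the actual point is constrained to lie in $\net_0$, so alternatively record the index within $\net_0$, which is still $n^{o(1)}$. Either way the per-coordinate count is $n^{1+o(1)}$ and the bound follows.

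For item (2), suppose $\|x_i - x_j\|_X \le 2$. By Lemma~\ref{lemma:scale-lb} applied with the ball of radius $36$ — and the observation that $r_{\ell_i}\ge 36$ — the relevant radii are large. The key point is to show $\ell_j \le \ell_i$, or at least that $r_{\ell_j}$ is comparable to $r_{\ell_i}$: if $\|x_i-x_j\|_X\le 2$ then the ball $B_X(x_j, r_{\ell_i})$ contains $B_X(x_i, r_{\ell_i}-2) \supseteq B_X(x_i, r_{\ell_i}/2)$ (using $r_{\ell_i}\ge 36 \ge 4$), and $B_X(x_i, r_{\ell_i}/2) = B_X(x_i, r_{\ell_i - 1})$ contains $\ge n^{2\eps}$ points by one-sided control, hmm — actually I'd argue directly that $B_X(x_j, r_{\ell_i})$ contains at least $n^{2\eps}$ of the $x_k$ (since it contains all $x_k$ with $\|x_i-x_k\|_X \le r_{\ell_i}-2$, and by minimality of $\ell_i$ we have $\ge n^{2\eps}$ points within $r_{\ell_i}$ of $x_i$, at least $n^{2\eps}$ of which survive removing the thin shell — this needs a tiny argument, or more simply: $\ell_i$ is defined so $B_X(x_i, r_{\ell_i})$ has $\ge n^{2\eps}$ points, and $B_X(x_j, r_{\ell_i}+2) \subseteq B_X(x_j, r_{\ell_i + 1})$ contains all of them, giving $\ell_j \le \ell_i+1$). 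By symmetry $|\ell_i - \ell_j|\le 1$. Then $\|\hat x_i - \hat x_j\|_X \le \|\hat x_i - x_i\|_X + \|x_i - x_j\|_X + \|x_j - \hat x_j\|_X \le (c_0 r_{\ell_i}+1) + 2 + (c_0 r_{\ell_j}+1) \le (c_0 r_{\ell_i}+1)+2+(2c_0 r_{\ell_i}+1) = 3c_0 r_{\ell_i} + 4 \le \tfrac16 r_{\ell_i}$, using $r_{\ell_i}\ge 36$ and $c_0 = .01$ (indeed $3c_0 r_{\ell_i}+4 \le .03 r_{\ell_i} + 4 \le \tfrac{1}{9}r_{\ell_i} + \tfrac19 r_{\ell_i} \le \tfrac16 r_{\ell_i}$ once $r_{\ell_i}\ge 36$). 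I'll tune the constant $\tfrac16$ to what the numbers actually give.

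For item (3), fix $i$ and let $A := \{j : \|\hat x_i - \hat x_j\|_X \le \tfrac13 r_{\ell_i}\}$. For each $j\in A$, $\|x_i - x_j\|_X \le \|x_i-\hat x_i\|_X + \|\hat x_i - \hat x_j\|_X + \|\hat x_j - x_j\|_X \le (c_0 r_{\ell_i}+1) + \tfrac13 r_{\ell_i} + (c_0 r_{\ell_j}+1)$. The obstacle here, and I expect it to be the main subtlety of the whole lemma, is controlling $r_{\ell_j}$ in terms of $r_{\ell_i}$ \emph{without} already knowing $j$ is close to $i$ in the original metric — it is a mild circularity. I would resolve it as follows: either $r_{\ell_j} \le 10 r_{\ell_i}$, in which case the displacement bound gives $\|x_i - x_j\|_X \le (c_0 r_{\ell_i}+1) + \tfrac13 r_{\ell_i} + (10 c_0 r_{\ell_i}+1) \le \tfrac12 r_{\ell_i} < r_{\ell_i}$ for $r_{\ell_i}\ge 36$, $c_0=.01$; or $r_{\ell_j} > 10 r_{\ell_i}$, and I claim this case is impossible for $j\in A$. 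Indeed, if $r_{\ell_j}$ is much larger than $r_{\ell_i}$ then by symmetry of the computation we'd need $\hat x_j$ close to $\hat x_i$ at scale $r_{\ell_i}$, but the net point $\hat x_j$ was chosen within $\net(\hat s_j, r_{\ell_j}; c_0 r_{\ell_j})$ and $\|x_j - \hat x_j\|_X \le c_0 r_{\ell_j}+1$; combining $\|\hat x_i - \hat x_j\|_X \le \tfrac13 r_{\ell_i}$ with $\|x_i - \hat x_i\|_X \le c_0 r_{\ell_i}+1 \le \tfrac12 r_{\ell_i}$ forces $\|x_i - \hat x_j\|_X \le \tfrac56 r_{\ell_i} \ll r_{\ell_j}$, so $\|x_i - x_j\|_X \le \tfrac56 r_{\ell_i} + c_0 r_{\ell_j} + 1$, which does not immediately contradict anything — so instead I'll run the argument the clean way: show unconditionally that $j\in A \Rightarrow \|x_i - x_j\|_X \le r_{\ell_i}$ by first establishing $r_{\ell_j}\le 10 r_{\ell_i}$ for such $j$ via a separate packing argument (too many $j$ with $\|\hat x_i - \hat x_j\|_X$ small and $r_{\ell_j}$ large would, after pulling back, cluster $\ge n^{2\eps}$ original points near $x_i$, contradicting Lemma~\ref{lemma:scale-lb} since $\tfrac56 r_{\ell_i} + \text{(small)} \le 36$-scale is not quite it — rather it contradicts the definition of $\ell_i$ with a slightly larger radius). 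Concretely: $|A| \le |\{j : \|x_i - x_j\|_X \le r_{\ell_i}\}|$ once the displacement bound is in force, and by minimality of $\ell_i$, $|\{j : \|x_i - x_j\|_X \le r_{\ell_i}\}| < n^{2\eps}$ would fail — no: by definition $|\{j : \|x_i-x_j\|_X \le r_{\ell_i}\}| \ge n^{2\eps}$, so I instead use $|\{j : \|x_i - x_j\|_X \le r_{\ell_i - 1}\}| < n^{2\eps}$ and arrange the displacement bound to land below $r_{\ell_i - 1} = \tfrac12 r_{\ell_i}$ — which the computation above does give ($\le \tfrac12 r_{\ell_i}$). Hence $A \subseteq \{j : \|x_i - x_j\|_X \le \tfrac12 r_{\ell_i}\} = \{j : \|x_i-x_j\|_X < r_{\ell_i - 1}\} \cup (\text{boundary})$, and $|A| < n^{2\eps}$ by minimality of $\ell_i$ (adjusting the strict/weak inequality by noting we may replace $\tfrac13$ by anything that keeps the total below $r_{\ell_i-1}$). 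Handling the degenerate case $\ell_i = 0$ is ruled out by Remark~\ref{boundsonell} since $r_{\ell_i}\ge 36$. The remaining task is just to verify the arithmetic with the stated constants $\eps = .1$, $c_0 = .01$, $\diam = 8$, which is routine; the conceptual heart is the near-circular scale-comparison in item (3), resolved by always bounding displacements by a fixed fraction of $r_{\ell_i}$ that stays strictly below $r_{\ell_i - 1}$ and then invoking minimality of $\ell_i$.
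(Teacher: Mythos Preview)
Your treatment of items (2) and (3) is essentially correct and parallels the paper: the triangle inequality together with $\|x_k-\hat x_k\|_X\le c_0 r_{\ell_k}+1$, the minimality of $\ell_i$, and the lower bound $r_{\ell_i}\ge 36$ do all the work. The circular scale comparison in item (3) that you flag is real; the clean way to break it is exactly the self-consistent bootstrap you gesture at: combine $\|x_i-x_j\|_X\le (c_0 r_{\ell_i}+1)+\tfrac13 r_{\ell_i}+(c_0 r_{\ell_j}+1)$ with the definitional bound $r_{\ell_j}\le 2(\|x_i-x_j\|_X+r_{\ell_i})$ and solve for $r_{\ell_j}$, obtaining $r_{\ell_j}\le 3r_{\ell_i}$ since $2c_0<1$. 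This is what the paper does (its written justification is terse), and with it your item (3) goes through.

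There is, however, a genuine gap in your item (1). Your claim that $|\net_0|=n^{o(1)}$ is false: $|\net_0|\le (3\diam\log n)^d$, and with $d$ of order $\log n$ this is $\exp(\Theta(\log n\cdot\log\log n))$, which is superpolynomial in $n$. Consequently your ``fix''---recording each $\hat s_i$ directly as an element of $\net_0$---costs $|\net_0|^n=\exp(\Theta(n\log n\log\log n))$, overwhelming the target $e^{n\log n}$. Your first instinct (record the index of $\hat s_i$ within $\hat\seeds_{\ell_i}$) is closer, but as you correctly note, $\hat\seeds_{\ell_i}$ depends on $\vx$, so an index alone does not determine $\hat s_i$.

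The missing idea is that the seed multisets $\hat\seeds_\ell$ should be recorded \emph{globally}, once per tuple $\vx$, not per coordinate. There are at most $1+\lceil\log_2(2\diam\log n)\rceil$ scales $\ell$, and each $\hat\seeds_\ell$ is a multiset of $\lfloor n^{1-\eps}\rfloor$ points from $\net_0$; hence the total number of possible seed configurations is at most $|\net_0|^{\,O(n^{1-\eps}\log\log n)}=\exp\bigl(n^{1-\eps}\cdot\mathrm{polylog}(n)\bigr)=e^{o(n)}$, which is negligible. Once the seeds are fixed, the per-coordinate data is $\ell_i$ (at most $O(\log\log n)$ values), the index of $\hat s_i$ in $\hat\seeds_{\ell_i}$ (at most $n^{1-\eps}$ values), and the index of $\hat x_i$ in $\net(\hat s_i,r_{\ell_i};c_0 r_{\ell_i})$ (at most $(3/c_0)^d$ values). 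The constraint $c_{\text{\tiny\ref{thm:main}}}\le \eps\log(3/c_0)/10$ ensures $(3/c_0)^d\cdot n^{1-\eps}<n$, so the per-coordinate cost is strictly below $n$ and the total is at most $e^{n\log n}$. This is precisely how the paper proceeds; without the global seed count and the $n^{1-\eps}$ saving, the cardinality bound does not close.
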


\begin{proof}
    We prove the items in order. \\

    \textbf{Proof of claim 1}. By definition, $|\net_0|$ is the covering number of $B_X(0, \diam \log n)$ using translates of $B_X(0,1)$. 
    By elementary volumetric considerations, along with the standard duality between covering and packing, 
    \[
        |\net_0| \le \pa{3 \, \diam \log n }^d \le e^{ \log(n)^2}\,.
    \]
    Recalling the definition of $\net(y,r_\ell;c_0r_\ell)$, we similarly have:
    \[
        \max_{\substack{y \in \RR^d \\ \ell \ge 0}}|\net(y,r_\ell;c_0r_\ell)| \le \pa{3/c_0}^d\,.
    \]
    Recall further that for every
    $\ell\in\{0,1,\dots, \lceil\log_2(2\diam \log n)\rceil\}$,
    $\hat S_\ell$ is a submultiset of $\net_0$ of size $\lfloor n^{1-\eps}\rfloor$.
    Fix for a moment
    any multisets $S_{\ell}^*\subset \net_0$ of size $\lfloor
    n^{1-\eps}\rfloor$, $0\leq \ell\leq
    \lceil\log_2(2\diam \log n)\rceil$,
    arbitrary numbers $\ell_i^*\in \{0,1,\dots, \lceil\log_2(2\diam \log n)\rceil\}$,
    and arbitrary elements $s_i^*\in \hat S_{\ell_i}^*$, $i\leq n$, and consider the set
    $$
    \big\{\hx(\vx)\,:\, \vx \in \tuples,\,\ell_i(\vx)=\ell_i^*,\,
    \hat s_i(\vx)=s_i^*,\;i\leq n;\;\;\hat S_\ell=S_\ell^*,\,\ell\leq
    \lceil\log_2(2\diam \log n)\rceil\big\}.
    $$
    By definition, for every $\vx \in \tuples$ with $\ell_i(\vx)=\ell_i^*$
    and $\hat s_i(\vx)=s_i^*$, the $i$--th component of $\hx$
    is the projection of $x_i$ onto the net $\net(s_i^*, r_{\ell_i^*}; c_0 r_{\ell_i^*})$ of size at most $\pa{3/c_0}^d$. Thus, the cardinality of the above set is at most
    $$
    \big(\pa{3/c_0}^d\big)^n.
    $$
    Further, given $\vx \in \tuples$ with $\ell_i(\vx)=\ell_i^*$, $i\leq n$,
    and $\hat S_\ell=S_\ell^*$, $\ell\leq
    \lceil\log_2(2\diam \log n)\rceil$,
    there are at most $\big(n^{1-\eps}\big)^n$
    admissible values for the tuple $\big(\hat s_i(\vx)\big)_{i=1}^n$, and hence
    $$
    \big|\big\{\hx(\vx)\,:\, \vx \in \tuples,\,\ell_i(\vx)=\ell_i^*,\,
    \;\hat S_\ell=S_\ell^*,\,\ell\leq
    \lceil\log_2(2\diam \log n)\rceil\big\}\big|
    \leq \big(\pa{3/c_0}^d\big)^n\cdot \big(n^{1-\eps}\big)^n.
    $$
    The upper estimate on the cardinality of $\net_0$ allows for estimating
    the number of potential choices for sets $\hat S_\ell$, 
    leading to the bound
    \begin{align*}
    \big|\big\{\hx(\vx)\,:\, \vx \in \tuples,\,\ell_i(\vx)=\ell_i^*
    \big\}\big|
    &\leq \big(\pa{3/c_0}^d\big)^n\cdot \big(n^{1-\eps}\big)^n
    \cdot |\net_0|^{(1+\lceil\log_2(2\diam \log n)\rceil)\lfloor n^{1-\eps}\rfloor}\\
    &\leq \big(\pa{3/c_0}^d\big)^n\cdot \big(n^{1-\eps}\big)^n
    \cdot \big|e^{ \log(n)^2}\big|^{(1+\lceil\log_2(2\diam \log n)\rceil)\lfloor n^{1-\eps}\rfloor}.
    \end{align*}
    Finally, there are at most $\big(1+\lceil\log_2(2\diam \log n)\rceil\big)^n$ possible realizations of $\ell_i(\vx)$, $i\leq n$.
    Combining the bounds and using the assumptions on $d$, we get the claim. \\

    \textbf{Proof of claim 2.} By triangle inequality and the construction of $\ell_j$:
    \[
        \ba{\cb{i' \in [n]\,:\, \|x_j - x_{i'}\|_X \le \|x_i-x_j\|_X + r_{\ell_i}}} \ge n^{2\eps}\,.
    \]
    As we have assumed $\|x_i - x_j\|_X \le 2$, it holds by definition of $\ell_j$ that $r_{\ell_j} \le 2(r_{\ell_i} + 2)$. 
    Then, by construction of the discretization $\hx$ (see Remark~\ref{distxitohatxi}),
    \[
        \|\hat x_i - \hat x_j\|_X \le \|x_i - x_j\|_X + \|x_i - \hat x_i\|_X + \|x_j - \hat x_j\|_X  \le   4 + c_0 (r_{\ell_i} + r_{\ell_j}) \le  4 + 3c_0 \,r_{\ell_i} + 4c_0\,.
    \]
    For all $i$, we have that $4 \le r_{\ell_i}/9$ by \cref{lemma:scale-lb}. In addition, $c_0 = .01$ by assumption. Thus, the right-hand side of the above inequality is at most $r_{\ell_i}/6$, as desired. \\

    \textbf{Proof of claim 3.} 
    We prove the claim by contradiction. Assume that
    there is $i\in[n]$ such that
    \[
        \ba{\cb{j \in [n]\,:\, \|\hat x_i - \hat x_j\|_X \le \frac{1}{3}\,r_{\ell_{i}}}} > n^{2\eps}\,.
    \]
    Recall that, by the definition of $r_{\ell_i}$,
    \[
        \ba{\cb{j \in [n]\,:\, \|x_i - x_j\|_X \le r_{\ell_{i}}/2}} < n^{2\eps}\,.
    \]
    Therefore, there exists $x_j$ with both
    $\|x_i - x_j\| > r_{\ell_i}/2$ and $\|\hat x_i - \hat x_j\| \le r_{\ell_i}/3$.
    By triangle inequality, one of the following must hold:
    \[
        \|\hat x_i - x_i \| \ge r_{\ell_i}/12\,, \quad \text{or} \quad \|\hat x_j - x_j \| \ge r_{\ell_i}/12\,.
    \]
    By construction of $\hx$ (see Remark~\ref{distxitohatxi}), we have that $\|\hat x_i - x_i\| \le c_0 r_{\ell_i}+1$ and $\|\hat x_j - x_j\| \le c_0 r_{\ell_j}+1$. Since $c_0 = .01$ and in view of Lemma~\ref{lemma:scale-lb}, the first of the two options above is impossible. Then the second option must hold, which implies
    \begin{equation}\label{akjnfaskfjn}
    c_0 r_{\ell_j}+1 \ge r_{\ell_i}/12.
    \end{equation}
    On the other hand, the definition of $\ell_j$
    implies that $\ell_j\leq\lceil\log_2(\|x_i - x_j\|_X+r_{\ell_i})\rceil
    \leq \lceil\log_2(3r_{\ell_i}/2)\rceil$, i.e $r_{\ell_j}\leq 3r_{\ell_i}$.
    Combining the last assertion with \eqref{akjnfaskfjn}, we get
    $$
    3c_0r_{\ell_i}+1\geq r_{\ell_i}/12,
    $$
    which again contradicts Lemma~\ref{lemma:scale-lb}. This proves the claim.
\end{proof}

\section{Proof of Main Result}\label{sec:pf}

The proof of \cref{thm:main} proceeds in two steps. First, we show that for any particular $X$, the proportion of $d$-regular graphs which do not admit robust obstacles to embedding (i.e. are  ``close'' to being embeddable) is at most $\exp\{-cn \log n\}$ for some $c > 0$. Second, using known metric entropy estimates on the Banach-Mazur compactum, we show that we can discretize the space of all possible $X$ with a net having cardinality $\cE{\mathrm o(n\log n)}$ that is fine-grained enough to preserve ``robust obstacles'' to embeddability.

\subsection{Preliminaries} We first introduce some relevant notation. As before, let $\dreg$ denote the collection of simple undirected $\Delta$--regular graphs on vertices $\{1,2,\dots,n\}$.

Let $\mathcal{B}_d$ denote the set of all $d$-dimensional normed spaces, equipped with the so-called (multiplicative) Banach-Mazur distance. Recall that the (multiplicative) Banach-Mazur distance\footnote{It is $\log d_{\mathrm{BM}}$ which actually forms a metric; under this metric, it is a simple fact that $\mathcal{B}_d$ is compact.} is given by:
\[
    d_{\mathrm {BM}}(X,X') := \inf \cb{\|u\|\|u^{-1}\| }\,,
\]  
where the infimum is over isomorphisms from $X$ to $X'$ and $\|u\|$ is the operator norm. Given a normed space $X$ and an $n$--tuple $\vx$ in $X$, we will further use notation $\geom_{X}(\vx)$ for the geometric graph in $X$ generated by $\vx$, i.e the graph with vertex set $\vx$, where any two vertices are adjacent if and only if the $\|\cdot\|_X$--distance between them is at most one. Finally, we write $\geom_{X}(\vx)\cong G$ for a graph $G$ on $[n]$
if the mapping $i\to x_i$, $i\leq n$, is an isomorphism between $G$ and $\geom_{X}(\vx)$. \\

The following proposition is the main technical result of the paper; the proof is deferred. 
\begin{proposition}[Robust non-embedding]\label{prop:non-embedding}
    Let $X$ be a $d$--dimensional normed space, and let $n$ be sufficiently large.
    Let $B_{\text{BM}}(X;\,2)$ denote the set of $d$-dimensional normed spaces having Banach--Mazur distance at most two from $X$. For $G$ uniformly drawn from $\dreg$,
    \[
        \PP[\dreg]{~ \bigcup_{(X', \,\vx) \in B_{\mathrm{BM}}(X,\,2) \,\times \,\Xi(X)}~ \cb{\geom_{X'}(\vx) \cong G} } \le \cE{- \frac{1}{4}\,n  \log n}\,.
    \]
\end{proposition}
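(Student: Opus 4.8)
The plan is to combine the cardinality bound for the discretization (Lemma~\ref{lemma:discretization}, claim 1) with a first-moment / union-bound computation over the discrete set $\net_M^n$, exploiting the fact that a uniform random $\Delta$-regular graph does not have too many copies of any fixed graph on $[n]$. The key point is that if $\geom_{X'}(\vx)\cong G$ for some $\vx\in\tuples(X)$ and some $X'$ within Banach--Mazur distance $2$ of $X$, then the discretized tuple $\hx(\vx)$, together with the scale data $\ell(\vx)$, already encodes enough geometric information to force $G$ to be isomorphic to a geometric-type graph read off from $\hx$ — and there are only $\exp(O(n\log n))$ such discretized configurations, with a small constant.

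**Main steps.** First I would fix $X$ and, for each discretized configuration $(\hx,\underline\ell)$ in the image of the map $\vx\mapsto(\hx(\vx),\ell(\vx))$, define an auxiliary graph $H=H(\hx,\underline\ell)$ on vertex set $[n]$ whose edges are, say, the pairs $(i,j)$ with $\|\hat x_i-\hat x_j\|_X\le \tfrac16 r_{\ell_i}\wedge\tfrac16 r_{\ell_j}$ (or a similar threshold dictated by claims 2 and 3 of Lemma~\ref{lemma:discretization}). The crucial sandwiching: by claim 2, every edge of $G$ — being a pair at $\|\cdot\|_{X'}$-distance $\le 1$, hence at $\|\cdot\|_X$-distance $\le 2$ since $d_{\mathrm{BM}}(X,X')\le 2$ — maps to a pair in $H$; and by claim 3, each vertex of $H$ has degree at most $n^{2\eps}$, so $H$ is sparse. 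One then argues that $G$ must be a \emph{subgraph} of $H$ with the same $\Delta n/2$ edges, and since $G$ is $\Delta$-regular while $H$ has max degree $n^{2\eps}$, the number of $\Delta$-regular subgraphs of a fixed $H$ on $[n]$, and more importantly the probability that the uniform random $\Delta$-regular $G$ coincides with one of them, is controllable. Concretely: $\Prob[\dreg]{G\subset H}\le \binom{|E(H)|}{\Delta n/2}/\binom{\binom n2}{\Delta n/2}$ up to the usual configuration-model correction, and since $|E(H)|\le \tfrac12 n\cdot n^{2\eps}=\tfrac12 n^{1+2\eps}$ this is at most $\exp(-c n\log n)$ with $c$ close to $1-2\eps-o(1)\ge 1/2$ for the stated $\eps=.1$ — the bound is dominated by the entropy loss $\log\binom{\binom n2}{\Delta n/2}\approx \tfrac{\Delta}{2}n\log n$ against the much smaller $\log\binom{|E(H)|}{\Delta n/2}\approx \tfrac{\Delta}{2}n(1+2\eps)\log n - \tfrac{\Delta}{2}n\log n = \Delta\eps\, n\log n$, wait — I would instead phrase it as: fixing which $\Delta n/2$ of the $\le \tfrac12 n^{1+2\eps}$ pairs of $H$ form $E(G)$ has at most $n^{(1+2\eps)\Delta n/2}$ choices, while a uniform $\Delta$-regular graph puts mass at most $n^{-\Delta n/2+o(n\log n)}$ on any single labelled graph, giving $\le \exp(-(1-2\eps)\tfrac{\Delta}{2}n\log n+o(n\log n))\le \exp(-\tfrac14 n\log n)$. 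Then I union-bound over the $\le \exp(n\log n)$ configurations from claim 1; since $\tfrac{(1-2\eps)\Delta}{2}>1$ for $\Delta\ge3,\eps=.1$, the union bound survives with room to spare, and absorbing the $o(n\log n)$ slack yields the stated $\exp(-\tfrac14 n\log n)$.

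**Main obstacle.** The delicate part is the combinatorial translation: showing that $\geom_{X'}(\vx)\cong G$ for \emph{some} $X'\in B_{\mathrm{BM}}(X,2)$ and \emph{some} $\vx\in\tuples(X)$ forces $E(G)\subseteq E(H(\hx(\vx),\ell(\vx)))$, where $H$ depends only on the discretized data in $\net_M$ and \emph{not} on $X'$. This requires care because the isomorphism and the sparsity/diameter hypotheses defining $\tuples$ are stated in $\|\cdot\|_X$, while the embedding lives in $X'$; I would handle it by noting that $d_{\mathrm{BM}}(X,X')\le2$ means (after applying the linear isomorphism realizing the distance, which does not change the combinatorial type of $\geom$) we may take $\vx$ to be the same tuple with $\tfrac12\|\cdot\|_{X'}\le\|\cdot\|_X\le 2\|\cdot\|_{X'}$, so $G$-edges are $\|\cdot\|_X$-close and $G$-non-edges are $\|\cdot\|_X$-far enough that the sparsity of $\vx$ in $X$ (needed for $\vx\in\tuples(X)$) is automatic; the remaining subtlety is checking that $\vx$ can be taken inside $B_X(0;\diam\log n)^n$, which follows from Lemma~\ref{lemma:diameter} applied to $G$ together with the factor-$2$ comparison, after a harmless translation. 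Once this reduction is in place, everything else is the routine first-moment estimate sketched above.
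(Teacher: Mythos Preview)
Your approach is essentially the paper's: define from $(\hx,\ell(\vx))$ a set of ``admissible'' pairs (your $H$ is the complement of the paper's $\calL$), use claim~2 of Lemma~\ref{lemma:discretization} to force $E(G)\subseteq E(H)$ via the factor-$2$ Banach--Mazur comparison, claim~3 to get $|E(H)|\le n^{1+2\eps}$, claim~1 for the $e^{n\log n}$ union bound, and a change of measure to $G(n,\Delta n/2)$ (your ``configuration-model correction'' is the paper's Corollary~\ref{cor:contiguity}) for the per-configuration estimate. One small point: the ``remaining subtlety'' you flag---that $\vx$ can be taken in $\tuples(X)$---is already part of the hypothesis here (the union in the statement ranges over $\tuples(X)$), so no diameter or sparsity reduction is needed in this proposition; that reduction belongs to, and is carried out in, the proof of Theorem~\ref{thm:main}.
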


Next, we record a known metric entropy estimate on the Banach-Mazur compactum. 
\begin{lemma}[\cite{pisier}]\label{lemma:BM-entropy}
    There is some positive universal constant $C_{\text{BM}}$ and a $2$-net $\netB(d)$ in $\mathcal{B}_d$ such that:
    \[
        |\netB(d)| \le \cE{e^{d\,C_{\text{BM}}}}\,.
    \]
\end{lemma}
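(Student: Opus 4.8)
The plan is to prove this by a two-scale volumetric net argument; although Lemma~\ref{lemma:BM-entropy} is quoted from \cite{pisier}, here is the route I would take. By John's theorem, every $d$-dimensional normed space is linearly isometric to a space whose unit ball $K$ satisfies $B_2^d\subseteq K\subseteq\sqrt d\,B_2^d$. Since $d_{\mathrm{BM}}$ is unchanged under linear isomorphism, it is enough to attach to every such $K$ a ``standard'' symmetric convex body at multiplicative Banach--Mazur distance at most $2$, and then to count the standard bodies.

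The first step approximates $K$ from inside by a polytope with few vertices. Fix a small universal constant $\delta$ and let $\mathcal M\subseteq K$ be a symmetric $\delta$-net of $K$ with respect to $\|\cdot\|_K$; the standard volumetric bound lets us take $N:=|\mathcal M|\le e^{O(d)}$. A routine successive-approximation argument --- expanding any $x\in K$ as $\sum_{k\ge0}\delta^k m_k$ with each $m_k\in\mathcal M$, then reading this off as $(1-\delta)^{-1}$ times a convex combination of points of $\mathcal M$ --- gives $(1-\delta)K\subseteq\mathrm{conv}(\mathcal M)\subseteq K$. Hence $\mathrm{conv}(\mathcal M)$ is a symmetric polytope with at most $N$ vertices, all contained in $\sqrt d\,B_2^d$, and at multiplicative distance at most $(1-\delta)^{-1}$ from $K$.

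The second step discretizes those vertices and counts. Fix a small universal constant $\eta$, let $\mathcal T$ be an $\eta$-net of $\sqrt d\,B_2^d$ in the Euclidean metric, so $M:=|\mathcal T|\le(1+2\sqrt d/\eta)^d\le e^{O(d\log d)}$, and replace each point of $\mathcal M$ by its nearest point of $\mathcal T$, symmetrically; let $P$ be the convex hull of the resulting set, a symmetric convex body with at most $N$ extreme points. The key observation is that, since $K\supseteq B_2^d$, the body $\mathrm{conv}(\mathcal M)$ contains $(1-\delta)B_2^d$, so its support function is at least $1-\delta$ in every direction; perturbing the points of $\mathcal M$ by at most $\eta$ changes this support function by at most $\eta$ in every direction, hence changes the body by a multiplicative factor $1+O(\eta)$. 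Thus for $\delta,\eta$ small enough, $d_{\mathrm{BM}}(K,P)\le(1-\delta)^{-1}\,(1+O(\eta))\le 2$. Letting $\netB(d)$ be the collection of normed spaces whose unit balls are the polytopes $P$ arising this way, we obtain a $2$-net of $\mathcal B_d$ with $|\netB(d)|\le M^N$; since $\log\log(M^N)=\log(N\log M)\le\log\!\big(e^{O(d)}\cdot O(d\log d)\big)=O(d)$, this gives $|\netB(d)|\le\cE{e^{C_{\mathrm{BM}} d}}$ for a universal constant $C_{\mathrm{BM}}$ (for small $d$ the bound is trivial, as $\mathcal B_d$ is compact).

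The delicate point --- and the reason a single-scale argument fails --- is the interaction of the two scales. If one instead tried to record $K$ via its support function on a net of the unit sphere, that net would have to be refined to scale $\sim 1/\sqrt d$, because support functions of bodies in John position range over $[1,\sqrt d]$ with Lipschitz constant $\sqrt d$; this would produce $e^{\Theta(d\log d)}$ free parameters, giving only the weaker bound $\cE{e^{\Theta(d\log d)}}$. Separating the scales repairs this: the inner polytopal approximation, taken at \emph{constant} accuracy, has only $e^{O(d)}$ vertices --- this is exactly where $B_2^d\subseteq K$ enters --- and although discretizing each of those vertices inside the radius-$\sqrt d$ Euclidean ball still offers $e^{\Theta(d\log d)}$ choices per vertex, once we take $\log\log$ of the total $M^N$ the vertex count $N=e^{O(d)}$ dominates and the surplus $\log d$ disappears.
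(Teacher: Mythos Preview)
The paper does not supply its own proof of this lemma; it is quoted from \cite{pisier}, with only the remark that inspecting that proof shows $C_{\mathrm{BM}}=10$ suffices. Your two-scale argument --- John position, polytopal approximation by the convex hull of a $\delta$-net in $\|\cdot\|_K$ with $e^{O(d)}$ vertices, then Euclidean discretization of those vertices inside $\sqrt d\,B_2^d$ --- is correct and is essentially the standard route to this metric-entropy bound on the Banach--Mazur compactum, so there is nothing to compare.
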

Opening up the proof of the lemma in \cite{pisier}, it is easily verified that setting $C_{\text{BM}} = 10$ suffices. Let us check that the above results readily imply the main theorem. 

\begin{proof}[Proof of \cref{thm:main}]
    Recalling $d \leq c_{\text{\tiny\ref{thm:main}}} \log n$ and letting $c_{\text{\tiny\ref{thm:main}}} \leq C_{\text{BM}}/2$, we have:
    \[
        |\netB(d)| \le \cE{n^{c_{\text{\tiny\ref{thm:main}}} C_{\text{BM}}}} \le e^{\sqrt{n}}\,.
    \]
    Let $G$ be distributed uniformly on $\dreg$. Define the high-probability event $\calD$ that the diameter of $G$ is not atypically huge,
    \[
        \calD := \cb{\mathrm{diam}(G) \le \diam \log(n)/2}\,.
    \]
    Applying \cref{lemma:diameter}, translation invariance of the metrics in $X$ and $X'$, and using the definition of $\netB$, 
    \begin{align*}
        &\PP[\dreg]{\bigcup_{d \le c_{\text{\tiny\ref{thm:main}}} \log n}~ \bigcup_{X \in \mathcal B_d}~\cb{\exists\,  \vx \in (\RR^d)^n\,:\, \geom_{X}(\vx) \cong G} } \\
         &=\PP[\dreg]{\bigcup_{d \le c_{\text{\tiny\ref{thm:main}}} \log n}~ \bigcup_{X \in \mathcal B_d}~\cb{\exists\,  \vx \in (\RR^d)^n\,:\, x_1=0,\;\geom_{X}(\vx) \cong G} } \\
        &= \ao{1} + \PP[\dreg]{\bigcup_{d \le c_{\text{\tiny\ref{thm:main}}} \log n}~ \bigcup_{X \in \mathcal B_d}~ \cb{\exists\,\vx \in (\RR^d)^n\,:\,x_1=0,\; \geom_{X}(\vx) \cong G} \,\bigg|\, \calD} \\
        &\le \ao{1} +\sum_{d \le c_{\text{\tiny\ref{thm:main}}} \log n} ~\sum_{X \in \mathcal \netB(d)} ~\PP[\dreg]{~ \bigcup_{X' \in B_{\mathrm{BM}}(X,\,2)}~ \cb{\exists\,\vx \in (\RR^d)^n\,:\,x_1=0,\; \geom_{X'}(\vx) \cong G} \,\bigg|\, \calD} \,.
    \end{align*}
    Recall the definition of $\Xi := \Xi(X)$ given in \cref{eq:domain}. If $\vx \in (\RR^d)^n \setminus \Xi(X)$, then either (1) there is a $\|\cdot\|_X$ ball of radius $1/2$, which is contained in a $\|\cdot\|_{X'}$ unit ball centered at one of the components of $\vx$, with more than $\Delta+1$ points of the $n$-tuple $\vx$. Or, (2) there are some $i$, $j$ with $\|x_i - x_j\|_X > \diam \log n$, and thus $\|x_i - x_j\|_{X'} > \diam \log(n)/2$. Either way, $\geom_{X'}(\vx) \ncong G$ (by $\Delta$-regularity considerations for the first case, and by diameter considerations for the second case). Thus, continuing the above computation, 
    \begin{align*}
        &\le \ao{1} +\sum_{d \le c_{\text{\tiny\ref{thm:main}}} \log n} ~\sum_{X \in \mathcal \netB(d)} ~\PP[\dreg]{~ \bigcup_{(X', \vx) \in B_{\mathrm{BM}}(X,\,2) \times \Xi(X)}~ \cb{\geom_{X'}(\vx) \cong G} \,\bigg|\, \calD} \\
        &\le \ao{1} + (c_{\text{\tiny\ref{thm:main}}} \log n)\,e^{\sqrt{n}} \,e^{-\frac{1}{4}\,n\log n} \\
        &= \ao{1}\,.
    \end{align*}
    From the first to the second line, we have invoked \cref{prop:non-embedding}. The theorem is established.
\end{proof}

\subsection{Robust non-embedding}
We use the following classical enumeration result.

\begin{lemma}[\cite{enumerate}]\label{lemma:enumerate}
    Let $\Delta \le n^{\ao{1}}$. Then:
    \[
        |\dreg| = \mo{1}\,e^{1 - \frac{\Delta^2}{4}} \frac{(\Delta n)!}{(\Delta n /2 )!\,2^{\Delta n /2}(\Delta !)^n}
        \,.
    \]
\end{lemma}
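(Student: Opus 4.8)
The plan is to prove the enumeration formula via the \emph{configuration} (or \emph{pairing}) \emph{model} of Bollob\'as. First I would attach $\Delta$ half-edges to each of the $n$ vertices, forming a ground set of $\Delta n$ half-edges, and define a \emph{configuration} to be a perfect matching of these half-edges into $\Delta n/2$ pairs; the number of configurations is the double factorial
\[
    (\Delta n - 1)!! = \frac{(\Delta n)!}{(\Delta n/2)!\,2^{\Delta n/2}}\,.
\]
Collapsing the half-edges at each vertex turns every configuration into a $\Delta$-regular multigraph on $[n]$. The combinatorial crux is the identity that every \emph{simple} $\Delta$-regular graph is the projection of exactly $(\Delta!)^n$ configurations, because at each vertex the $\Delta$ half-edges can be assigned bijectively to the $\Delta$ distinct incident edges. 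Writing $q_n$ for the probability that a uniformly random configuration projects to a simple graph, this yields
\[
    |\dreg| = \frac{(\Delta n-1)!!}{(\Delta!)^n}\,q_n
    = \frac{(\Delta n)!}{(\Delta n/2)!\,2^{\Delta n/2}(\Delta!)^n}\,q_n\,,
\]
so the whole problem reduces to showing $q_n = (1+o(1))\,e^{(1-\Delta^2)/4}$.

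The second step is to estimate $q_n$ through a Poisson approximation for the obstructions to simplicity. I would let $L$ count loops and $M$ count unordered pairs of parallel edges in the random multigraph, and compute the first moments
\[
    \Exp L = n\binom{\Delta}{2}\frac{1}{\Delta n-1}\longrightarrow\frac{\Delta-1}{2}\,,
    \qquad
    \Exp M = \binom{n}{2}\binom{\Delta}{2}^{2}\frac{2}{(\Delta n-1)(\Delta n-3)}\longrightarrow\frac{(\Delta-1)^{2}}{4}\,.
\]
For fixed $\Delta$ the plan is then the standard method of moments: show that the joint factorial moments of $(L,M)$ converge to those of independent Poisson variables with the above means, by summing over $k$-tuples of disjoint candidate defects and checking that overlapping tuples contribute negligibly. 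Poisson convergence then gives
\[
    q_n = \Prob(L=0,\,M=0)\longrightarrow e^{-(\Delta-1)/2}\,e^{-(\Delta-1)^2/4}=e^{-(\Delta^2-1)/4}=e^{(1-\Delta^2)/4}\,,
\]
which produces exactly the asserted exponential constant.

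The regime $\Delta\le n^{o(1)}$ is where I expect the main difficulty, and I would not rely on a fixed-$\Delta$ distributional limit there: as $\Delta$ grows the means $\Exp L$ and $\Exp M$ diverge, and the formula demands a genuinely \emph{relative} error $q_n=(1+o(1))e^{(1-\Delta^2)/4}$ rather than mere convergence in distribution. The robust tool for this is McKay's \emph{switching method}: one introduces local operations that delete a single loop, respectively a single parallel edge, from a configuration, and counts up to a controlled multiplicative error how many configurations with $k$ defects of a given type arise from those with $k-1$. Solving the resulting recursion shows that the number of defects of each type is asymptotically Poisson with the means above, uniformly over $\Delta\le n^{o(1)}$, whence $q_n=(1+o(1))e^{(1-\Delta^2)/4}$ with the required precision. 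The hard part is precisely this uniform control: the fixed-$\Delta$ moment computation is routine, but certifying that the switching error terms accumulated across all values of $k$ remain $o(1)$ as $\Delta$ grows is delicate. Since the statement is quoted from \cite{enumerate}, it ultimately suffices to cite the verification there; the configuration-model reduction above isolates exactly the simplicity estimate $q_n=(1+o(1))e^{(1-\Delta^2)/4}$ that must be supplied.
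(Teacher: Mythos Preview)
The paper does not supply its own proof of this lemma: it is stated as a classical enumeration result and attributed directly to \cite{enumerate}. Your proposal correctly sketches the standard configuration-model derivation behind that citation --- the $(\Delta n-1)!!/(\Delta!)^n$ count, the reduction to the simplicity probability $q_n$, and the Poisson/switching estimate $q_n=(1+o(1))e^{(1-\Delta^2)/4}$ --- so there is nothing to compare and no gap to flag.
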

In particular, the following consequence will be utilized later:
\begin{corollary}\label{cor:contiguity}
    Let $\Delta n /2$ be an integer, and let $G$ be drawn uniformly from graphs on $[n]$ with $\Delta n / 2$ edges, denoted as $G \sim G(n,\Delta n / 2)$. If $\Delta = n^{\ao{1}}$,
    \[
        \PP{G \in \dreg} = e^{-\ao{n \Delta \log n}}\,.
    \]
\end{corollary}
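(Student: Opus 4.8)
\textit{Proof proposal for Corollary~\ref{cor:contiguity}.}
The plan is to express the probability as a ratio of cardinalities and then compare the two counts via Stirling's formula. Write $N := \binom n2$ and $m := \Delta n/2$. Since $G(n,m)$ is uniform over the $\binom Nm$ simple graphs on $[n]$ with exactly $m$ edges, and every $\Delta$-regular graph on $[n]$ has exactly $m$ edges, we have
\[
    \PP{G \in \dreg} = \frac{|\dreg|}{\binom Nm}\,,
\]
so it suffices to show $\log|\dreg| = \log\binom Nm - \ao{n\Delta\log n}$. Note that we need not track the sign of the error: since the left-hand side is a probability, $\log \PP{G\in\dreg}\le 0$ automatically, so $|\log\PP{G\in\dreg}| = \ao{n\Delta\log n}$ already yields the claim.

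For the numerator, I would invoke Lemma~\ref{lemma:enumerate} and apply Stirling's approximation to $(\Delta n)!$, $(\Delta n/2)!$, and to each factor $\Delta!$ in $(\Delta!)^n$. After collecting terms, the $\log 2$ contributions cancel and one is left with
\[
    \log|\dreg| = \frac{\Delta n}{2}\log n - \frac{\Delta n}{2}\log\Delta + \frac{\Delta n}{2} + O(\Delta^2) + O(n\log\Delta)\,,
\]
where the $O(\Delta^2)$ absorbs both the $e^{1-\Delta^2/4}$ factor in Lemma~\ref{lemma:enumerate} and the $\ao1$-type constants. For the denominator, I would write $\binom Nm = \frac{N^m}{m!}\cdot\frac{N!}{N^m(N-m)!}$; since $m = \Delta n/2$ and $N \sim n^2/2$ the last factor equals $e^{-O(m^2/N)} = e^{-O(\Delta^2)}$, while Stirling applied to $m!$ together with $\log N = 2\log n - \log 2 + O(1/n)$ gives, after the same simplifications,
\[
    \log\binom Nm = \frac{\Delta n}{2}\log n - \frac{\Delta n}{2}\log\Delta + \frac{\Delta n}{2} + O(\Delta^2) + O(n\log\Delta)\,.
\]

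Subtracting the two displays, the three leading terms $\frac{\Delta n}{2}\log n$, $-\frac{\Delta n}{2}\log\Delta$, $\frac{\Delta n}{2}$ cancel exactly, leaving an error of order $O(\Delta^2) + O(n\log\Delta)$. Under the hypothesis $\Delta = n^{\ao1}$ we have $\Delta^2 = n^{\ao1} = \ao n$, and $\log\Delta = \ao{\log n}$ so that $n\log\Delta = \ao{n\log n}$; since $n\Delta\log n \ge n\log n$, both error terms are $\ao{n\Delta\log n}$. Hence $\log\PP{G\in\dreg} = \ao{n\Delta\log n}$ and the corollary follows. The only delicate point — and the step I expect to require the most care — is the bookkeeping of the lower-order Stirling terms: one must verify that the $\Delta^2$ contributions (from the $e^{1-\Delta^2/4}$ factor and from the $\frac{N!}{N^m(N-m)!}$ correction) and the $n\log\Delta$ contributions (from $(\Delta!)^n$ and from $\log m!$) are genuinely negligible against $n\Delta\log n$, which is exactly where the $\Delta = n^{\ao1}$ assumption enters.
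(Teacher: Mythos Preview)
Your proposal is correct and follows essentially the same route as the paper: both arguments reduce to applying Stirling's formula to the enumeration in Lemma~\ref{lemma:enumerate} and checking that the residual error is $O(n\Delta\log\Delta)+O(\Delta^2)=\ao{n\Delta\log n}$. The only cosmetic difference is that the paper first passes to the $G(n,\Delta/n)$ model (so that the denominator becomes the explicit weight $(\Delta/n)^{\Delta n/2}(1-\Delta/n)^{n(n-1-\Delta)/2}$ rather than $\binom{N}{m}$) and then remarks that the conclusion transfers to $G(n,\Delta n/2)$, whereas you work directly in $G(n,m)$ and Stirling--approximate $\binom{N}{m}$; both lead to the same cancellation of the three leading terms.
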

As $\Delta$ is assumed constant, \cref{cor:contiguity} roughly asserts that the change-of-measure between the uniform $\Delta$-regular model and the more tractable $G(n,m)$ Erd\"os-R\'enyi model is negligible with respect to the scale of the claimed probability estimate in \cref{prop:non-embedding}.
\begin{proof}[Proof of \cref{cor:contiguity}] We prove the result instead for $G$ drawn from the ``$G(n,p)$'' Erd\"os-R\'enyi model, $G(n,\Delta /n)$. The result then immediately extends to $G(n,\Delta n/2)$. 
By definition of the Erd\"os-R\'enyi model, we have:
\[
    \PP{G \in \dreg} = |\dreg| \pa{\frac{\Delta}{n}}^{\Delta n / 2}\pa{1 - \frac{\Delta}{n}}^{\frac n 2 (n - 1 - \Delta)} = |\dreg| \,\cE{-\frac{\Delta}{2}\,n\log n - \aO{\Delta n\log \Delta}} \,.
\]
Next, applying Stirling's formula to the conclusion of \cref{lemma:enumerate}, we obtain:
\begin{align*}
     |\dreg| &= \cE{\pm\aO{\Delta n \log \Delta }} \pa{\frac{\Delta n}{e}}^{\Delta n / 2} = \cE{\pm\aO{\Delta n \log \Delta }} \,\cE{\frac{\Delta}{2}\,n\log n }\,.
\end{align*}
Combining the previous two equations yields the result:
\[
        \PP{G \in \dreg} = \cE{-\aO{n \Delta \log \Delta}} = \cE{-\ao{n \Delta \log n}}\,.
    \]
\end{proof}

We are ready to prove the main result of this section. 
\begin{proof}[Proof of \cref{prop:non-embedding}]
    For now on, we fix any normed space $X=(\RR^d,\|\cdot\|_X)$.
    For $\vx \in \tuples(X)$, let $\calL(\vx,X)$ be the set of ``long distances'' for $\vx$, defined as:
    \[
        \calL(\vx,X) := \cb{(i,j)\,:\,\|\hat x_i - \hat x_j\|_X > \frac{1}{3}\,r_{\ell_i}}
    \]  
    (where the discretization $\hx$ of $\vx$ is also constructed in the space $X$).
    Observe that $\calL(\vx,X)$ depends on $\vx$ only through $\hx$ and $\ell(\vx)$. Thus, the first assertion of \cref{lemma:discretization} yields:
    \begin{equation}\label{eq:cardinality-long}
        \ba{\cb{\calL(\vx,X) \,:\, \vx \in \tuples(X)}} \le  \ba{\cb{(\hx,\ell(\vx)) \,:\, \vx \in \tuples(X)}} \le \cE{n \log n} \,.
    \end{equation}
    Fix for a moment any $\vx\in\tuples(X)$.
    We claim that if the edge-set $E(G)$ of a graph $G$ on $[n]$ has a non-empty intersection with $\calL(\vx,X)$, then for any $X' \in B_{\mathrm{BM}}(X,2)$,
    \[
        \geom_{X'}(\vx) \ncong G\,.
    \]
    Indeed, assume for contradiction there exists a pair $(i,j)\in
    \calL(\vx,X)$ with $\{i,j\}\in E(G)$, and $\geom_{X'}(\vx) \cong G$. By definition of a geometric graph, $\|x_i -x_j\|_{X'} \le 1$. As $X' \in B_{\mathrm{BM}}(X,2)$, it follows that $\|x_i - x_j\|_{X} \le 2$. By the second assertion of \cref{lemma:discretization}, 
    \[
        \|\hat x_i -\hat x_j\|_{X} \le \frac{1}{6}r_{\ell_i}\,.
    \]
    This implies that $(i,j) \not \in \calL(\vx,X)$, which supplies the desired contradiction. In what follows, by a slight abuse of notation
    we write $E(G)\cap \calL( \vx,X)=\emptyset$ whenever there is
    no pair $(i,j)\in \calL( \vx,X)$ with $\{i,j\}\in E(G)$.

\medskip
    
    In view of the proven claim, letting $E(G)$ denote the edge-set of an {\it arbitrary} random graph $G$ on $[n]$, we obtain:
    \[
        \PP{\exists\, (X',\vx) \in B_{\text{BM}}(X;\,2) \times \tuples(X) \,:\, \geom_{X'}(\vx) \cong G} \le \PP{\exists\, \vx \in  \tuples(X) \,:\, E(G)\cap \calL( \vx,X)=\emptyset}\,.
    \]
    In the remainder of this proof, let $\mathbb P_{\dreg}$ and $\mathbb P_{G(n,\Delta n / 2)}$ respectively denote the uniform measure on $\dreg$ and on the set of graphs on $[n]$ with $\Delta n /2$ edges. 
    We compute:
    \begin{align*}
        \PP[ \dreg]{\exists\, \vx \in  \tuples(X) \,:\, E(G)\cap \calL( \vx,X)=\emptyset} &= \PP[G(n,\Delta n / 2)]{\exists\, \vx \in  \tuples(X) \,:\, E(G)\cap \calL( \vx,X)=\emptyset\,\big|\,G \in \dreg} \\
        &\le \frac{\PP[G(n,\Delta n / 2)]{\exists\, \vx \in  \tuples(X) \,:\, E(G)\cap \calL( \vx,X)=\emptyset}}{\PP[G(n,\Delta n / 2)]{G \in \dreg}}\,.
    \end{align*}
    In the last line, the probability in the denominator is lower-bounded via \cref{cor:contiguity}. For the numerator, recalling \eqref{eq:cardinality-long}, a union bound yields:
    \begin{align*}
        \PP[G(n,\Delta n / 2)]{\exists\, \vx \in  \tuples(X) \,:\, E(G)\cap \calL( \vx,X)=\emptyset} &\le e^{n\log n} \max_{\vx \in \tuples(X)}\, \PP[G(n,\Delta n / 2)]{ E(G)\cap \calL( \vx,X)=\emptyset}\,.
    \end{align*}
    We claim that for all $\vx \in \tuples(X)$, \begin{equation}\label{eq:erdos-renyi}
        \PP[G(n,\Delta n / 2)]{ E(G)\cap \calL( \vx,X)=\emptyset} \le \cE{-\frac{(1-\eps)}{2}\,n\,\Delta\log(n) + \aO{n\Delta}}\,.
    \end{equation}
    Indeed, note that $|E(G) \cap \calL(\vx,X)|$ is distributed as a hypergeometric random variable with $n(n-1)/2$ total population, $|\calL(\vx,X)|$ population marked as ``success,'' and $n\Delta/2$ trials. Then:
    \begin{align*}
        \PP[G(n,\Delta n / 2)]{ E(G)\cap \calL( \vx,X)=\emptyset} = \frac{
        \binom{\binom{n}{2} - |\calL(\vx,X)|}{n\Delta/2}}{\binom{\binom{n}{2}}{n\Delta/2}}, 
    \end{align*}
    where, again by a slight abuse of notation, we think of $|\calL(\vx,X)|$ as the number of unordered pairs $\{i,j\}$, $i\neq j$, with $(i,j)$ or $(j,i)$ in $\calL(\vx,X)$.
    For any $\vx \in \tuples(X)$, we have by the third assertion of \cref{lemma:discretization} that $|\binom{n}{2}-|\calL(\vx,X)|| \le n^{1+2\eps}$. Applying the elementary inequality $(m/k)^k \le \binom{m}{k} \le (me/k)^k$ yields
    \begin{align*}
        \PP[G(n,\Delta n / 2)]{ E(G)\cap \calL( \vx,X)=\emptyset} &\le \pa{\frac{e n^{1+\eps}}{n(n-1)/2}}^{n\Delta/2} 
        = \cE{-\frac{1-\eps}{2}\,n\,\Delta \log n + \aO{n\Delta}} \,.
    \end{align*}
    This establishes \eqref{eq:erdos-renyi}. Combining \eqref{eq:erdos-renyi} with the computations preceding it yields the desired result: 
    \begin{align*}
        \PP[ \dreg]{\exists\, (X',\vx) \in B_{\text{BM}}(X;\,2) \times \tuples(X) \,:\, \geom_{X'}(\vx) \cong G}  &\le e^{n\log n}\,e^{-\frac{(1-\eps)}{2}\,n\,\Delta\log(n) + \ao{n\log n}} \\
        &\le e^{-\frac{1}{4}\,n\log n }\,.
    \end{align*}
\end{proof}

\bibliography{sphericity}
\bibliographystyle{acm}

\end{document}